\newtheorem{theorem}{Theorem}
\newtheorem{lemma}{Lemma}	
\newtheorem{remark}{Remark}
\newtheorem{proposition}{Proposition}	
\newtheorem{definition}{Definition}
\newtheorem{proof}{Proof}    
\def\bbeta{\boldsymbol{\beta}}
\def\balpha{\boldsymbol{\alpha}}
\def\bb{\mathbf{b}}
\def\bx{\mathbf{x}}
\def\bbx{{\ensuremath{\mathbf x}}}
\def\diag{\mathrm{diag}}
\def\SDP{\mathrm{SDP}}
\def\relu{\mathrm{ReLU}}
\def\bE{\mathbf{E}}
\def\bA{\mathbf{A}}
\def\bB{\mathbf{B}}
\def\deepsdp{\mathrm{DeepSDP}}
\def\milp{\mathrm{MILP}}
\def\sdr{\mathrm{SDR}}
\def\lp{\mathrm{LP}}
\newcommand{\mahyar}[1]{{\color{black}{#1}}}
\title{Safety Verification and Robustness Analysis of Neural Networks via Quadratic Constraints and Semidefinite Programming}
\author{Mahyar Fazlyab$^\dagger$, Manfred Morari, George J. Pappas
\thanks{$^\dagger$Corresponding author: mahyarfa@seas.upenn.edu, mahyarfazlyab@jhu.edu. This work was supported by DARPA Assured Autonomy and NSF CPS 1837210. The authors are with the Department of Electrical and Systems Engineering, University of Pennsylvania. Email: \{mahyarfa, morari, pappasg\}@seas.upenn.edu.}}
\begin{document}

\maketitle
\thispagestyle{empty}
\pagestyle{empty}

\begin{abstract} 
	Certifying the safety or robustness of neural networks against input
	uncertainties and adversarial attacks is an emerging challenge in the
	area of safe machine learning and control. To provide such a guarantee,
	one must be able to bound the output of neural networks when their
	input changes within a bounded set. In this paper, we propose a
	semidefinite programming (SDP) framework to address this problem for
	feed-forward neural networks with general activation functions and
	input uncertainty sets. Our main idea is to abstract various properties
	of activation functions (e.g., monotonicity, bounded slope, bounded
	values, and repetition across layers) with the formalism of quadratic
	constraints. We then analyze the safety properties of the abstracted
	network via the S-procedure and semidefinite programming. Our framework
	spans the trade-off between conservatism and computational
	efficiency and applies to problems beyond safety verification. We
	evaluate the performance of our approach via numerical problem instances of various sizes.

\end{abstract}

\begin{IEEEkeywords}
	Deep Neural Networks, Robustness Analysis, Safety Verification, Convex Optimization, Semidefinite Programming.
\end{IEEEkeywords}

\section{Introduction} \label{sec:introduction}

Neural networks have become increasingly effective at many difficult machine-learning tasks. However, the nonlinear and large-scale nature of neural networks makes them hard to analyze and, therefore, they are mostly used as black-box models without formal guarantees. In particular, neural networks are highly vulnerable to attacks, or more generally, uncertainty in their input. In the context of image classification, for example, neural networks can be easily deluded into changing their classification labels by slightly perturbing the input image \cite{su2019one}. Indeed, it has been shown that even imperceptible perturbations in the input of the state-of-the-art neural networks cause natural images to be misclassified with high probability \cite{moosavi2017universal}. Input perturbations can be either of an adversarial nature \cite{szegedy2013intriguing}, or they could merely occur due to compression, resizing, and cropping \cite{zheng2016improving}. 
These drawbacks limit the adoption of neural networks in safety-critical applications such as self-driving vehicles \cite{bojarski2016end}, aircraft collision avoidance procedures \cite{julian2016policy}, speech recognition, and recognition of voice commands; see \cite{xiang2018verification} for a survey.

Motivated by the serious consequences of the fragility of neural networks to input uncertainties or adversarial attacks, there has been an increasing effort in developing tools to measure or improve the robustness of neural networks. Many results focus on specific adversarial attacks and attempt to harden the network by, for example, crafting hard-to-classify examples \cite{goodfellow6572explaining, kurakin2016adversarial,papernot2016limitations,moosavi2016deepfool}. Although these methods are scalable and work well in practice, they still suffer from false negatives. Safety-critical applications require provable robustness against any bounded variations in the input data. As a result, many tools have recently  been used, adapted, or developed for this purpose, such as mixed-integer linear programming \cite{bastani2016measuring,dutta2018output,lomuscio2017approach,tjeng2017evaluating}, convex relaxations and duality theory \cite{kolter2017provable,dvijotham2018dual,salman2019convex}, Satisfiability Modulo Theory (SMT) \cite{pulina2012challenging}, dynamical systems \cite{ivanov2018verisig,xiang2018output}, Abstract Interpretation \cite{mirman2018differentiable,gehr2018ai2}, interval-based methods \cite{weng2018towards,zhang2018efficient,hein2017formal,wang2018efficient,wang2018formal}. All these works aim at bounding the worst-case value of a performance measure when their input is perturbed within a specified range.

\medskip

\noindent \textit{Our contribution.} In this paper, we develop a novel framework based on semidefinite programming (SDP) for safety verification and robustness analysis of neural networks against norm-bounded perturbations in their input. Our main idea is to abstract nonlinear activation functions of neural networks by the constraints they impose on the pre- and post- activation values. In particular, we describe various properties of activation functions using Quadratic Constraints (QCs), such as bounded slope, bounded values, monotonicity, and repetition across layers. Using this abstraction, any property (e.g., safety or robustness) that we can guarantee for the abstracted network will automatically be satisfied by the original network as well. The quadratic form of these constraints allows us to formulate the verification problem as an SDP feasibility problem. Our main tool for developing the SDP is the $\mathcal{S}$-procedure from robust control \cite{yakubovich1997s}, which allows us to reason about multiple QCs. Our framework has the following notable features:
\begin{itemize}[leftmargin=*]
	\item We use various forms of QCs to abstract any type of activation function.
	\item Our method can capture \emph{the cross-coupling between neurons across different layers}, thereby reducing conservatism. This feature, which hinges on the assumption that the same activation function is used throughout the entire network (repetition across layers), becomes particularly effective for deep networks. 
	\item We can control the trade-off between computational complexity and conservatism by systematically including or excluding different types of QCs. 
\end{itemize}

In this paper, we focus on the neural network verification problem (formally stated in $\S$\ref{subsec: Problem Statement}) but the proposed framework (input-output characterization of neural networks via quadratic constraints) can be adapted to other problems such as sensitivity analysis of neural networks to input perturbations, output reachable set estimation, probabilistic verification, bounding the Lipschitz constant of neural networks, and closed-loop stability analysis.

%
\subsection{Related Work}
The performance of certification algorithms for neural networks can be measured along three axes. The first axis is the tightness of the certification bounds; the second axis is the computational complexity, and, the third axis is applicability across various models (e.g. different activation functions). These axes conflict. For instance, the conservatism of the verification algorithm is typically at odds with the computational complexity. The relative advantage of any of these algorithms is application-specific. For example, reachability analysis and safety verification applications call for less conservative algorithms, whereas in robust training, computationally fast algorithms are desirable \cite{weng2018towards,kolter2017provable}.

On the one hand, formal verification techniques such as Satisfiability Modulo (SMT) solvers \cite{ehlers2017formal,huang2017safety,katz2017reluplex}, or integer programming approaches \cite{lomuscio2017approach,tjeng2017evaluating} rely on combinatorial optimization to provide tight certification bounds for piece-wise linear networks, whose complexity scales exponentially with the size of the network in the worst-case. A notable work to improve scalability is \cite{tjeng2017evaluating}, where the authors do exact verification of piecewise-linear networks using mixed-integer programming with an order of magnitude reduction in computational cost via tight formulations for non-linearities and careful preprocessing. 

On the other hand, certification algorithms based on continuous optimization are more scalable but less accurate. A notable work in this category is \cite{kolter2017provable}, in which the authors propose a linear-programming (LP) relaxation of piece-wise linear networks and provide upper bounds on the worst-case loss using weak duality. The main advantage of this work is that the proposed algorithm solely relies on forward- and back-propagation operations on a modified network, and thus is easily integrable into existing learning algorithms. In \cite{raghunathan2018certified}, the authors propose an SDP relaxation of one-layer sigmoid-based neural networks based on bounding the worst-case loss with a first-order Taylor expansion. The closest work to the present work is \cite{raghunathan2018semidefinite}, in which the authors propose a semidefinite relaxation (SDR) for certifying robustness of piece-wise linear multi-layer neural networks. This relaxation is based on the so-called ``lifting", where the original problem is embedded in a much larger space. This SDR approach provides tighter bounds than those of \cite{kolter2017provable} but is less scalable. Finally, compared to the SDR method of \cite{raghunathan2018semidefinite}, our SDP framework yield tighter bounds, especially for deep networks, and is not limited to $\relu$ networks. Parts of this work, specialized to probabilistic verification, have appeared in the conference paper \cite{fazlyab2019probabilistic}.


%
%
The rest of the paper is organized as follows. In $\S$\ref{subsec: output reachable set estimation} we formulate the safety verification problem and present the assumptions. In $\S$\ref{sec: Problem Abstraction via Quadratic Constraints}, we abstract the problem with Quadratic Constraints (QCs). In $\S$\ref{sec: SDP for One-layer Neural Networks} we state our main results. In $\S$\ref{sec: Optimization Over the Abstracted Network}, we discuss further utilities of our framework beyond safety verification. In $\S$\ref{sec: Numerical Experiments} we provide numerical experiments to evaluate the performance of our method and compare it with competing approaches. Finally, in $\S$\ref{sec: Conclusions} we draw conclusions.
\subsection{Notation and Preliminaries} \label{subsection: Notation and Preliminaries}
We denote the set of real numbers by $\mathbb{R}$, the set of nonnegative real numbers by $\mathbb{R}_{+}$, the set of real $n$-dimensional vectors by $\mathbb{R}^n$, the set of $m\times n$-dimensional matrices by $\mathbb{R}^{m\times n}$, the $m$-dimensional vector of all ones by $\mathrm{1}_m$, the $m\times n$-dimensional zero matrix by $0_{m\times n}$, and the $n$-dimensional identity matrix by $I_n$. We denote by $\mathbb{S}^{n}$, $\mathbb{S}_{+}^n$, and $\mathbb{S}_{++}^n$ the sets of $n$-by-$n$ symmetric, positive semidefinite, and positive definite matrices, respectively. The $p$-norm ($p \geq 1$) is displayed by $\|\cdot\|_p \colon \mathbb{R}^n \to \mathbb{R}_{+}$. For $A \in \mathbb{R}^{m\times n}$, the inequality $A \geq 0$ is element-wise. For $A \in \mathbb{S}^n$, the inequality $A \succeq 0$ means $A$ is positive semidefinite. For sets $\mathcal{I}$ and $\mathcal{J}$, we denote their Cartesian product by $\mathcal{I} \times \mathcal{J}$. The indicator function of a set $\mathcal{X}$ is defined as $\mathbf{1}_{\mathcal{X}}(x)=1$ if $x \in \mathcal{X}$, and $\mathbf{1}_{\mathcal{X}}(x)=0$ otherwise. For two matrices $A,B$ of the same dimension, we denote their Hadamard product by $A \circ B$. \mahyar{A function $g \colon \mathbb{R}^n \to \mathbb{R}$ is $\alpha$-convex ($0 \leq \alpha < \infty$) if $g-(\alpha/2)\|\cdot\|_2^2$ is convex. Furthermore, $g$ is $\beta$-smooth ($0 \leq \beta<\infty$) if it is differentiable and $(\beta/2)\|\cdot\|_2^2-g$ is convex. Finally, if $g$ is $\alpha$-convex and $\beta$-smooth, then
\begin{gather*} 
\frac{\alpha \beta}{\alpha+\beta}  \|y-x\|_2^2 + \frac{1}{\alpha+\beta}\|\nabla g(y)-\nabla g(x)\|_2^2  \\ \leq  (\nabla g(y)-\nabla g(x))^\top(y-x),
\end{gather*}
for all $x,y \in \mathbb{R}^n$ \cite[Theorem 2.1.12]{nesterov2013introductory}.}
\section{Safety Verification and Robustness Analysis of Neural Networks} \label{subsec: output reachable set estimation}
\subsection{Problem Statement} \label{subsec: Problem Statement}
Consider the nonlinear vector-valued function $f \colon \mathbb{R}^{n_x} \to \mathbb{R}^{n_f}$ described by a multi-layer feed-forward neural network. Given a set $\mathcal{X} \subset \mathbb{R}^{n_x}$ of possible inputs (e.g., adversarial examples), the neural network maps $\mathcal{X}$ to an output set $\mathcal{Y}$ given by
\begin{align}
\mathcal{Y}=f(\mathcal{X}) := \{ y \in \mathbb{R}^{n_f} \mid y=f(x),  \ x \in \mathcal{X}\}.
\end{align}
The desirable properties that we would like to verify can often be represented by a safety specification set $\mathcal{S}_y$ in the output space of the neural network. In this context, the network is safe if the output set lies within the safe region, i.e., if the inclusion $f(\mathcal{X}) \subseteq \mathcal{S}_y$ holds. Alternatively, we can define $\mathcal{S}_x := f^{-1}(\mathcal{S}_y)$ as  the inverse image of $\mathcal{S}_y$ under $f$. Then, safety corresponds to the inclusion $\mathcal{X} \subseteq \mathcal{S}_x$. 


Checking the condition $ \mathcal{Y} \subseteq \mathcal{S}_y$, however, requires an exact computation of the nonconvex set $\mathcal{Y}$, which is very difficult. Instead, our interest is in finding a non-conservative over-approximation ${\mathcal{\tilde Y}}$ of $\mathcal{Y}$ and verifying the safety properties by checking the condition $\tilde{\mathcal{Y}} \subseteq \mathcal{S}_y$. This approach \emph{detects all false negatives} but also produces false positives, whose rate depends on the tightness of the over-approximation--see Figure \ref{fig:safety}. The goal of this paper is to solve this problem for a broad class of input uncertainties and safety specification sets using semidefinite programming.
%
%
\begin{figure}
	\centering
	\includegraphics[width=1.0\linewidth]{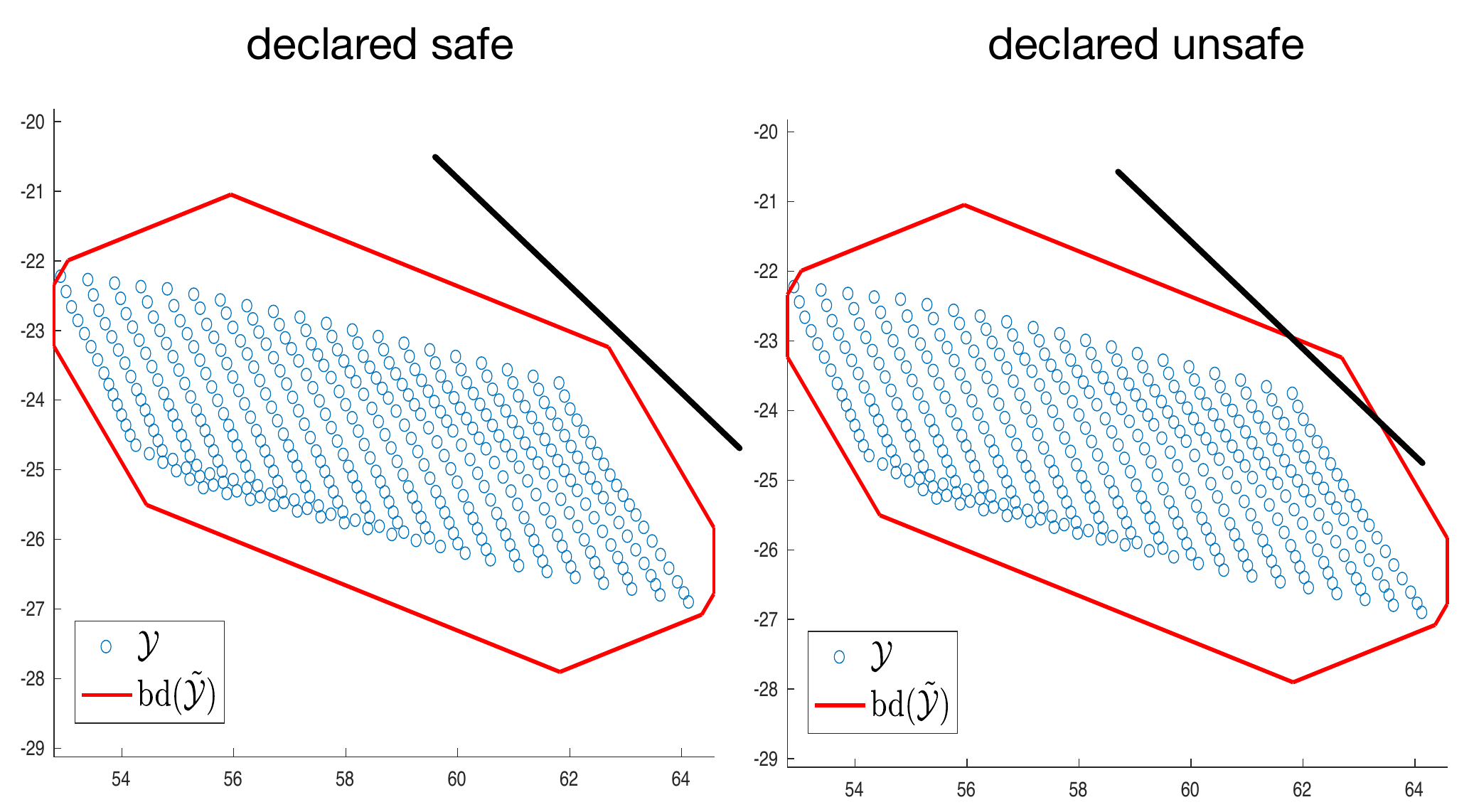}
	\caption{\small The output set (in blue), the boundary of its over-approximation (in red), and the hyperplane characterizing the safe region (in black). Left: The network is deemed safe since $\tilde{\mathcal{Y}} \subseteq \mathcal{S}_y$. Right: The network is deemed unsafe since $\tilde{\mathcal{Y}} \not \subseteq \mathcal{S}_y$.}
	\label{fig:safety}
\end{figure}
\medskip

\subsubsection{Classification Example} Consider a data (e.g., image) classification problem with $n_f$ classes, where a feed-forward neural network $f \colon \mathbb{R}^{n_x} \to \mathbb{R}^{n_f}$ takes as input a data point $x$ and returns an $n_f$-dimensional vector of scores (or logits) -- one for each class. The classification rule is based on assigning $x$ to the class with the highest score.  That is, the class of $x$ is given by $C(x)= \mathrm{argmax}_{1 \leq i \leq n_f} \ f_i(x)$.  To evaluate the local robustness of the neural network around a correctly-classified point $x^\star$, \mahyar{we consider a set $\mathcal{X}$, containing $x^\star$, that represents the set of all possible perturbations of $x^\star$.} 
\mahyar{In image classification, a popular choice are perturbations in the $\ell_{\infty}$ norm, i.e.,  $\mathcal{X} = \{x \colon \|x-x^\star\|_{\infty} \leq \epsilon \}$, where $\epsilon$ is the maximum perturbation applied to each pixel.}
Then the classifier is locally robust at $x^\star$ if it assigns all the perturbed inputs to the same class as $x^\star$, i.e., if $C(x)=C(x^\star)$ for all $x \in \mathcal{X}$. For this problem, the safe set is the polytope
$
\mathcal{S}_y = \{y \in \mathbb{R}^{n_f} \mid y_{i^\star} \geq y_{i} \ \text{for all } i \neq i^\star \},
$
where $i^\star=\mathrm{argmax}_{1 \leq i \leq n_f} \ f_i(x^\star)$ is the class of $x^\star$.

%
%
%

\subsection{Neural Network Model}
For the model of the neural network, we consider an $\ell$-layer feed-forward fully-connected neural network \mahyar{$f \colon \mathbb{R}^{n_x} \to \mathbb{R}^{n_f}$} described by the following recursive equations:
\begin{align} \label{eq: DNN model 0}
x^0 &=x \\ \nonumber 
x^{k+1} &=\phi(W^k x^k + b^k) \quad k=0, \cdots, \ell-1 \\ \nonumber 
f(x) &= W^\ell x^\ell + b^{\ell},
\end{align}
where $x^0 \!=\! x \in \mathbb{R}^{n_0} (n_0\!=\!n_x)$ is the input to the network and $W^{k} \in \mathbb{R}^{n_{k+1} \times n_k}, \ b^k \in \mathbb{R}^{n_{k+1}}$ are the weight matrix and bias vector of the $(k+1)$-th layer. We denote by $n=\sum_{k=1}^{\ell} n_k$ the total number of neurons. The nonlinear activation function $\phi$ ($\relu$\footnote{Rectified Linear Unit.}, sigmoid, tanh,  etc.) is applied coordinate-wise to the pre-activation vectors, i.e., it is of the form
\begin{align} \label{eq: repeated nonlinearity DNN}
\phi(x) := [\varphi(x_1) \ \cdots \ \varphi(x_{n_k})]^\top, \ x \in \mathbb{R}^{n_{k}},
\end{align}
where $\varphi$ is the activation function of each neuron. The output $f(x)$ depends on the specific application we are considering. For example, in image classification with cross-entropy loss, $f(x)$ represents the logit input to the softmax function; or, in feedback control, $x$ is the input to the neural network controller (e.g., tracking error) and $f(x)$ is the control input to the plant. 
\section{Problem Abstraction via Quadratic Constraints} \label{sec: Problem Abstraction via Quadratic Constraints}
In this section, our goal is to provide an abstraction of the verification problem described in $\S$\ref{subsec: Problem Statement} that can be converted into a semidefinite program. Our main tool is Quadratic Constraints (QCs), which were first developed in the context of robust control \cite{megretski1997system} for describing nonlinear, time-varying, or uncertain components of a system. We start with the abstraction of sets using QCs.
%
\subsection{Input Set}
We now provide a particular way of representing the input set $\mathcal{X}$ that will prove useful for developing the SDP.

\begin{definition}\label{def: QC for sets}
	Let $\mathcal{X} \subset \mathbb{R}^{n_x}$ be a nonempty set. Suppose $\mathcal{P}_{\mathcal{X}}$ is the set of all \mahyar{symmetric indefinite} matrices $P$ such that 
	\begin{align} \label{eq: QC for set}
	\begin{bmatrix}
	x \\ 1
	\end{bmatrix}^\top P \begin{bmatrix}
	x \\ 1
	\end{bmatrix} \geq 0  \quad \text{for all } x \in \mathcal{X}.
	\end{align}
	We then say that $\mathcal{X}$ satisfies the QC defined by $\mathcal{P}_{\mathcal{X}}$.
\end{definition} \medskip
Note that by definition, $\mathcal{P}_{\mathcal{X}}$ is a convex cone, i.e., if $P_1,P_2 \in \mathcal{P}_{\mathcal{X}}$ then $\theta_1 P_1 + \theta_2 P_2 \in \mathcal{P}_{\mathcal{X}}$ for all nonnegative scalars $\theta_1,\theta_2$. Furthermore, we can write
\begin{align} \label{eq: set over approximation by qc}
\mathcal{X} \subseteq \bigcap_{P \in \mathcal{P}_{\mathcal{X}}} \left\{x \in \mathbb{R}^{n_x} \colon  \begin{bmatrix}
x \\ 1
\end{bmatrix}^\top P \begin{bmatrix}
x \\ 1
\end{bmatrix} \geq 0 \right\}.
\end{align}
In other words, we can over approximate $\mathcal{X}$ by the intersection of a possibly infinite number of sets defined by quadratic inequalities. 
%
We will see in $\S$\ref{sec: SDP for One-layer Neural Networks} that the matrix $P \in \mathcal{P}_{\mathcal{X}}$ appears as a decision variable in the SDP. In this way, we can optimize the over-approximation of $\mathcal{X}$ to minimize the conservatism of the specific verification problem we want to solve. 

%
\begin{proposition} \label{prop: QC for hyperrect}
	\textbf{(QC for hyper-rectangle)} The hyper-rectangle $\mathcal{X} =  \{x $ $\in \mathbb{R}^{n_x} \mid \underline{x} \leq x \leq \bar{x}\}$ satisfies the QC defined by 
	\begin{gather} \label{eq: QC for hyperrect}
	\mathcal{P}_{\mathcal{X}} \!=\! \left\{P \mid P = \begin{bmatrix}
	-2\Gamma & \Gamma (\underline{x}  + \bar{x}) \\ (\underline{x}+\bar{x})^\top \Gamma   & -2\underline{x}^\top \Gamma \bar{x}
	\end{bmatrix}\right\},
	\end{gather}
	where $\Gamma \in \mathbb{R}^{n_x\times n_x}$ is diagonal and nonnegative. For this set, \eqref{eq: set over approximation by qc} holds with equality. 
\end{proposition}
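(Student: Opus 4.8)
The plan is to verify the two claims separately: first that every matrix $P$ of the stated form satisfies the quadratic inequality \eqref{eq: QC for set} for all $x \in \mathcal{X}$, and then that \eqref{eq: set over approximation by qc} holds with equality, i.e., that the intersection of the quadratic sublevel sets defined by $\mathcal{P}_{\mathcal{X}}$ is exactly $\mathcal{X}$. For the first claim, I would simply expand the quadratic form. With $P$ as in \eqref{eq: QC for hyperrect} and $\Gamma = \diag(\gamma_1,\dots,\gamma_{n_x})$ nonnegative, the product $\begin{bmatrix} x \\ 1 \end{bmatrix}^\top P \begin{bmatrix} x \\ 1 \end{bmatrix}$ works out to $-2x^\top \Gamma x + 2(\underline{x}+\bar{x})^\top \Gamma x - 2\underline{x}^\top \Gamma \bar{x}$, which factors coordinate-wise as $\sum_{i=1}^{n_x} -2\gamma_i (x_i - \underline{x}_i)(x_i - \bar{x}_i)$. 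Since $\gamma_i \geq 0$ and, for $x \in \mathcal{X}$, each factor $(x_i - \underline{x}_i) \geq 0$ while $(x_i - \bar{x}_i) \leq 0$, every summand is nonnegative, so the total is $\geq 0$. This establishes $P \in \mathcal{P}_{\mathcal{X}}$.

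For the equality in \eqref{eq: set over approximation by qc}, the inclusion $\subseteq$ is immediate from Definition~\ref{def: QC for sets} (or from the first part). For the reverse inclusion, suppose $x \notin \mathcal{X}$; then some coordinate violates a bound, say $x_j > \bar{x}_j$ or $x_j < \underline{x}_j$. In either case $(x_j - \underline{x}_j)(x_j - \bar{x}_j) > 0$. Choosing $\Gamma = e_j e_j^\top$ (the diagonal matrix with a single $1$ in position $j$), which is diagonal and nonnegative and hence yields a valid $P \in \mathcal{P}_{\mathcal{X}}$, the quadratic form evaluates to $-2(x_j - \underline{x}_j)(x_j - \bar{x}_j) < 0$, so $x$ fails the inequality for this particular $P$ and therefore does not lie in the intersection. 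Hence the intersection is contained in $\mathcal{X}$, giving equality.

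The only mild subtlety worth stating carefully is the dimension bookkeeping: the proposition writes $\Gamma \in \mathbb{R}^{n \times n}$, but since $\underline{x}, \bar{x}, x \in \mathbb{R}^{n_x}$ the relevant $\Gamma$ must be $n_x \times n_x$; I would note this (it is presumably a typo for $n_x$) and proceed. Beyond that, there is no real obstacle — the argument is a direct expansion plus a one-coordinate separating-constraint construction. The cleanest exposition is to do the coordinate-wise factorization once at the start, since it serves both directions: nonnegativity of each term on $\mathcal{X}$ gives membership in $\mathcal{P}_{\mathcal{X}}$, and strict positivity of a single term off $\mathcal{X}$ gives the separating $P$.
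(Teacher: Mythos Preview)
Your proposal is correct and follows essentially the same route as the paper: both hinge on the coordinate-wise factorization $\begin{bmatrix} x \\ 1 \end{bmatrix}^\top P \begin{bmatrix} x \\ 1 \end{bmatrix} = 2\sum_i \gamma_i (x_i-\underline{x}_i)(\bar{x}_i-x_i)$ and the equivalence of $\underline{x}\le x\le \bar{x}$ with the $n_x$ sign conditions on these products. Your treatment of the equality in \eqref{eq: set over approximation by qc} via the separating choice $\Gamma=e_j e_j^\top$ is more explicit than the paper's (which folds this into the word ``equivalent''), and your remark that $\Gamma$ should be $n_x\times n_x$ is a valid catch.
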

\begin{proof}
	See Appendix \ref{prop: QC for hyperrect proof}.
\end{proof}

Our particular focus in this paper is on perturbations in the $\ell_{\infty}$ norm, $\mathcal{X} = \{x \mid \|x-x^\star\|_{\infty} \leq \epsilon \}$, which are a particular class of hyper-rectangles with $\underline{x}=x^\star-\epsilon \mathrm{1}$ and $\bar{x}=x^\star+\epsilon \mathrm{1}$. 
We can adapt the result of Proposition \ref{prop: QC for hyperrect} to other sets such as polytopes, zonotopes, and ellipsoids, as outlined below. The derivation of the corresponding QCs can be found in Appendix \ref{QCs for Polytopes, Zonotopes, and Ellipsoids}.

\subsubsection{Polytopes}
Let $\mathcal{X} = \{x \in \mathbb{R}^{n_x} \mid H x \leq h \}$ be a polytope, where $H \in \mathbb{R}^{m \times n_x}, h \in \mathbb{R}^m$. Then $\mathcal{X}$ satisfies the QC defined by
\begin{align}\label{eq: QC for polytope}
\mathcal{P}_{\mathcal{X}} =\left\{ P\mid P \!=\! \begin{bmatrix} H^\top \Gamma H & -H^\top \Gamma h \\ -h^\top \Gamma  H & h^\top \Gamma h\!
\end{bmatrix} \right\},
\end{align}
where $\Gamma \in \mathbb{S}^{m}, \Gamma \geq 0, \Gamma_{ii}=0$. Furthermore, if the set $\{x \in \mathbb{R}^{n_x} \mid Hx \geq h\}$ is empty, then \eqref{eq: set over approximation by qc} holds with equality. 

\medskip

\subsubsection{Zonotopes} A zonotope is an affine transformation of the unit cube, 
%
$\mathcal{X}= \{x \in \mathbb{R}^{n_x} \mid x=x_c + A \lambda, \quad \lambda \in[0,1]^m \}$, 
%
where $A \in \mathbb{R}^{n_x \times m}$ and $x_c \in \mathbb{R}^{n_x}$. Then any $P \in \mathcal{P}_{\mathcal{X}}$ satisfies
\begin{align} \label{eq: QC for zonotopes}
\begin{bmatrix}
A & x_c \\ 0 & 1
\end{bmatrix}^\top 
P
\begin{bmatrix}
A & x_c \\ 0 & 1
\end{bmatrix}+\begin{bmatrix}
2\Gamma & -\Gamma \mathrm{1}_m \\ -\mathrm{1}_{m}^\top \Gamma & 0
\end{bmatrix} \succeq 0,
\end{align}
for some diagonal and nonnegative $\Gamma \in \mathbb{R}^{m\times m}$.
\medskip
\subsubsection{Ellipsoids}
Suppose the input set $\mathcal{X}$ is an ellipsoid defined by
$\mathcal{X} = \{x \in \mathbb{R}^{n_x} \mid \|Ax+b\|_2 \leq 1 \}$, 
where $A \in \mathbb{S}^{n_x}$ and $b \in \mathbb{R}^{n_x}$. Then $\mathcal{X}$ satisfies the QC defined by
\begin{align} \label{eq: Ellipsoid}
\mathcal{P}_{\mathcal{X}} = \left\{P \mid P =  \mu \begin{bmatrix}
-A^\top A & -A^\top b \\ -b^\top A & 1-b^\top b
\end{bmatrix},\ \mu \geq 0 \right\}.
\end{align}
%

%
%
%
%
%
\subsection{Safety Specification Set}
As mentioned in the introduction, the safe set can be characterized either in the output space ($\mathcal{S}_y$) or in the input space ($\mathcal{S}_x$). In this paper, we consider the latter. Specifically, we assume $\mathcal{S}_x$ can be represented (or inner approximated) by the intersection of finitely many quadratic inequalities:
\begin{align} \label{eq: safety set spec}
\mathcal{S}_x = \bigcap_{i=1}^{m} \left\{ x \in \mathbb{R}^{n_x} \mid \begin{bmatrix}
x \\ f(x) \\ 1
\end{bmatrix}^\top S_i \begin{bmatrix}
x \\ f(x) \\ 1
\end{bmatrix} \leq 0 \right\},
\end{align}
where the $S_i \in \mathbb{S}^{n_x+n_f+1}$ are given. In particular, this characterization includes ellipsoids and polytopes in the output space. For instance, for an output safety specification set described by the polytope
%
$\mathcal{S}_y = \cap_{i=1}^{m} \left\{y \in \mathbb{R}^{n_f} \mid c_i^\top y - d_i \leq 0 \right\}$, 
%
the $S_i$'s are given by
\begin{align*}
S_i = \begin{bmatrix}
0& 0 & 0 \\ 0 & 0 & c_i \\ 0 & c_i^\top &  -2d_i
\end{bmatrix} \quad  i=1,\cdots,m.
\end{align*}
%

\subsection{Abstraction of Nonlinearities by Quadratic Constraints}
One of the main difficulties in the analysis of neural networks is the composition of nonlinear activation functions. To simplify the analysis, instead of analyzing the network directly, our main idea is to remove the nonlinear activation functions from the network but retain the constraints they impose on the pre- and post-activation signals. Using this abstraction, any property (e.g., safety or robustness) that we can guarantee for the ``constrained'' network will automatically be satisfied by the original network as well. In the following, we show how we can encode various properties of activation functions (e.g., monotonicity, bounded slope, and bounded values) using QCs. We first provide a formal definition below.
\begin{definition}[QC for functions] \label{eq: QC def 0}
	Let $\phi \colon \mathbb{R}^n \to \mathbb{R}^{n}$ and suppose $\mathcal{Q}_{\phi} \subset \mathbb{S}^{2n+1}$ is the set of all symmetric indefinite matrices $Q$ such that 
	\begin{align} \label{eq: QC def}
	\begin{bmatrix}
	x \\ \phi(x) \\ 1
	\end{bmatrix}^\top Q \begin{bmatrix}
	x \\ \phi(x) \\ 1
	\end{bmatrix} \geq 0 \quad \text{for all } x \in \mathcal{X},
	\end{align}
	where $\mathcal{X} \subseteq \mathbb{R}^{n}$ is a nonempty set. Then we say $\phi$ satisfies the QC defined by $\mathcal{Q}_{\phi}$ on $\mathcal{X}$.
\end{definition}
We remark that our definition of a QC slightly differs from the one used in robust control \cite{megretski1997system}, by including a constant in the vector surrounding the matrix $Q$, which allows us to incorporate affine constraints (e.g., bounded nonlinearities). 
In view of Definition \ref{def: QC for sets}, we can interpret \eqref{eq: QC def} as a QC satisfied by the graph of $\phi$, $\mathcal{G}(\phi):=\{(x,y) \mid y=\phi(x), \ x \in \mathcal{X}\} \subset \mathbb{R}^{2n}$, i.e., $Q_{\phi}=\mathcal{P}_{\mathcal{G}(\phi)}$. Therefore, we can write
\begin{align*}
\mathcal{G}(\phi) \subseteq \bigcap_{Q \in \mathcal{Q}_{\phi}} \left\{(x,y) \in \mathbb{R}^{2n} \colon  \begin{bmatrix}
x \\ y \\1
\end{bmatrix}^\top Q \begin{bmatrix}
x \\ y \\ 1
\end{bmatrix} \geq 0 \right\}.
\end{align*}
In other words, we over-approximate the graph of $\phi$ by a quadratically constrained set. 

The derivation of quadratic constraints is function specific but there are certain rules and heuristics that can be used for all of them which we describe below.

\medskip
\subsubsection{Sector-Bounded Nonlinearities}
Consider the nonlinear function $\varphi \colon \mathbb{R} \to \mathbb{R}$ with $\varphi(0)=0$. We say that $\varphi$  is \emph{sector-bounded} in the sector $[\alpha,\beta]$ ($\alpha \leq \beta<\infty$) if the following condition holds for all $x \in \mathbb{R}$,\footnote{For the case where $\alpha=-\infty$ or $\beta=+\infty$, we define the sector bound inequality as $x (\varphi(x)-\beta x) \leq 0$ and $x(\alpha x -\varphi(x)) \leq 0$, respectively.}
\begin{align} \label{eq: sector bound}
(\varphi(x)-\alpha x)(\varphi(x)-\beta x) \leq 0.
\end{align}
Geometrically, this inequality means that the function $y=\varphi(x)$ lies in the sector formed by the lines $y=\alpha x$ and $y=\beta  x$ (see Figure \ref{fig: sector condition}). As an example, the $\relu$ function belongs to the sector $[0,1]$ and in fact, lies on its boundary.

For the vector case, let $K_1,K_2 \in \mathbb{R}^{n \times n}$ be two matrices such that $K_2-K_1$ is symmetric positive semidefinite. We say that $\phi \colon \mathbb{R}^n \to \mathbb{R}^n$  is sector-bounded in the sector $[K_1,K_2]$ if the following condition holds for all $x \in \mathbb{R}^n$ \cite{khalil2002nonlinear},
\begin{align} \label{eq: sector bound vector}
(\phi(x)-K_1 x)^\top(\phi(x)-K_2 x) \leq 0,
\end{align}
%
%
or, equivalently,
	\begin{align*}
	\begin{bmatrix}
	x \\ \phi(x) \\ 1
	\end{bmatrix}^\top \begin{bmatrix}
	-K_1^\top K_2\!-\!K_2^\top K_1 & K_1^\top\!+\!K_2^\top & 0 \\ K_1\!+\!K_2 & -2I_n & 0 \\ 0 & 0 & 0
	\end{bmatrix}\begin{bmatrix}
	x \\ \phi(x) \\ 1
	\end{bmatrix} \!\geq\! 0.
	\end{align*}
The sector condition does not impose any restriction on the slope of the function. This motivates a more accurate description of nonlinearities with bounded slope \cite{zames1968stability}. 
%


%
\medskip

\subsubsection{Slope-Restricted Nonlinearities}
	A nonlinear function $\phi \colon \mathbb{R}^n \to \mathbb{R}^n$ is slope-restricted in the sector $[\alpha,\beta]$ ($\alpha \leq \beta < \infty$), if for any $x,x^\star \in \mathbb{R}^n$,
	\begin{align} \label{eq: quadratic constraint 0}
	(\phi(x)\!-\!\phi(x^\star)\!-\!\alpha (x\!-\!x^{\star}))^\top(\phi(x)\!-\!\phi(x^{\star})\!-\!\beta (x\!-\!x^{\star})) \leq 0.
	\end{align}
	%
%
For the one-dimensional case $(n = 1)$, \eqref{eq: quadratic constraint 0} states that the chord connecting any two points on the curve of $\phi$ has a slope that is at least $\alpha$ and at most $\beta$: 
\begin{align*} 
\alpha \leq \dfrac{\phi(x)-\phi(x^\star)}{x-x^\star} \leq \beta \quad \forall x,x^\star \in \mathbb{R}.
\end{align*}
%
%
Note that a slope-restricted nonlinearity with $\phi(0)=0$ is also sector bounded.  \mahyar{Furthermore, if $\phi$ is slope-restricted in $[\alpha,\beta]$, then the function $x \mapsto \phi(x+x^\star)-\phi(x^\star)$ belongs to the sector $[\alpha I_n,\beta I_n]$ for any $x^\star$. Finally, the gradient of an $\alpha$-convex and $\beta$-smooth function is slope-restricted in $[\alpha,\beta]$.}
\begin{figure}
	\centering
	\includegraphics[width=0.9\linewidth]{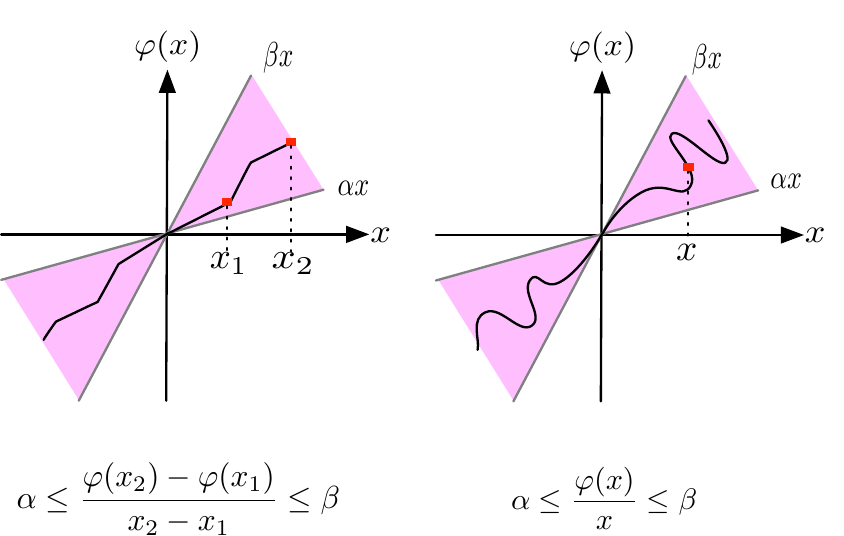}
	\caption{\small A slope-restricted nonlinearity (left) and a sector-bounded nonlinearity (right).}
	\label{fig: sector condition}
\end{figure}

To connect the results of the previous subsection to activation functions in neural networks, we recall the following result {from convex analysis \cite{nesterov2013introductory}.} 
\begin{lemma}[gradient of convex functions] \label{lem: gradient of convex functions}
	Consider a function $g \colon \mathbb{R}^n \to \mathbb{R}$ that is $\alpha$-convex and $\beta$-smooth. Then the gradient function $\nabla g \colon \mathbb{R}^n \to \mathbb{R}^n$ is slope-restricted in the sector $[\alpha,\beta]$.
\end{lemma}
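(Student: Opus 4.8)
The plan is to show that the slope-restriction inequality \eqref{eq: quadratic constraint 0} holds for $\phi = \nabla g$ by invoking the quantitative inequality recorded in $\S$\ref{subsection: Notation and Preliminaries} (from \cite[Theorem 2.1.12]{nesterov2013introductory}) for $\alpha$-convex, $\beta$-smooth functions, and then reorganizing it algebraically. Concretely, I would fix $x,y \in \mathbb{R}^n$, write $u = y-x$ and $v = \nabla g(y)-\nabla g(x)$, and recall that the cited inequality gives
\begin{align*}
\frac{\alpha\beta}{\alpha+\beta}\,\|u\|_2^2 + \frac{1}{\alpha+\beta}\,\|v\|_2^2 \;\leq\; v^\top u.
\end{align*}
The goal is to derive $(v-\alpha u)^\top(v-\beta u) \leq 0$, i.e. $\|v\|_2^2 - (\alpha+\beta)\,v^\top u + \alpha\beta\,\|u\|_2^2 \leq 0$.

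The key step is to multiply the displayed inequality through by $\alpha+\beta > 0$ (handling the degenerate case $\alpha = \beta = 0$, where $g$ is affine and both sides vanish, and the case $\alpha+\beta=0$ only if $\alpha=\beta=0$ since both are nonnegative — so positivity is fine whenever the function is not affine; if $\alpha+\beta=0$ treat it separately or by a limiting argument). Multiplying gives
\begin{align*}
\alpha\beta\,\|u\|_2^2 + \|v\|_2^2 \;\leq\; (\alpha+\beta)\,v^\top u,
\end{align*}
which rearranges exactly to $\|v\|_2^2 - (\alpha+\beta)\,v^\top u + \alpha\beta\,\|u\|_2^2 \leq 0$, i.e. $(v-\alpha u)^\top(v-\beta u) \leq 0$. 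Substituting back $u = y-x$, $v = \nabla g(y)-\nabla g(x)$ yields precisely \eqref{eq: quadratic constraint 0} with $\phi = \nabla g$, $x^\star$ renamed. Since $g$ is $\alpha$-convex and $\beta$-smooth we automatically have $0 \leq \alpha \leq \beta < \infty$, so the pair $[\alpha,\beta]$ is a legitimate slope sector.

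I do not expect a genuine obstacle here; the proof is essentially a one-line algebraic consequence of the Nesterov inequality already quoted in the preliminaries. The only point requiring a word of care is the boundary/degenerate case: when $\alpha+\beta = 0$ (equivalently $\alpha=\beta=0$), $g$ is affine, $\nabla g$ is constant, so $v = 0$ and the slope inequality holds trivially; and when $\alpha=\beta>0$, the inequality forces $v = \alpha u$ exactly, consistent with $g$ being a quadratic $\tfrac{\alpha}{2}\|\cdot\|_2^2$ plus an affine term. Optionally I would also note the converse-flavored remark that this shows $\nabla g$ lies \emph{inside} the sector $[\alpha I_n, \beta I_n]$ after the shift $x \mapsto \nabla g(x+x^\star)-\nabla g(x^\star)$, tying back to the sector-bounded discussion, but that is not needed for the statement as written.
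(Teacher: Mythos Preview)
Your proposal is correct and matches the paper's intended approach: the paper does not give an explicit proof of this lemma but merely cites \cite{nesterov2013introductory}, having already quoted the key inequality from \cite[Theorem 2.1.12]{nesterov2013introductory} in $\S$\ref{subsection: Notation and Preliminaries}; your argument is precisely the one-line algebraic rearrangement of that inequality into the slope-restriction form, with appropriate handling of the degenerate case $\alpha=\beta=0$.
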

Notably, all \mahyar{commonly-used} activation functions for deep neural networks are gradients of convex functions. Therefore, they belong to the class of slope-restricted nonlinearities, according to Lemma \ref{lem: gradient of convex functions}. We have the following result.
\begin{proposition}\normalfont 
	The following statements hold true.
	\begin{itemize}
		\item[(a)]     The $\relu$ function $\varphi(x)=\max(0,x), \ x \in \mathbb{R}$ is slope-restricted and sector-bounded in $[0,1]$.
		\item[(b)] The sigmoid function, $\varphi(x) = \frac{1}{1+e^{-x}}, \ x \in \mathbb{R}$ is slope-restricted in $[0,1]$.
		\item[(c)] The tanh function, $\varphi(x)=\tanh(x), \ x \in \mathbb{R}$ is slope-restricted and sector-bounded in $[0,1]$.
		\item[(d)] The leaky $\relu$ function, $\varphi(x)=\max(ax,x), \ x \in \mathbb{R}$ with $a>0$ is slope-restricted and sector-bounded in $[\min(a,1),\max(a,1)]$.
		\item[(e)] The exponential linear function (ELU), $\varphi(x) = \max(x,a(e^x-1)),\ x \in \mathbb{R}$ with $a>0$ is slope-restricted and sector-bounded in $[0,1]$.
		\item[(f)] The softmax function, $\phi(x)=  [\frac{e^{x_1}}{\sum_{i=1}^{d} e^{x_i}},$  $\cdots,$  $\frac{e^{x_n}}{\sum_{i=1}^{d} e^{x_i}}]^\top$, $x \in \mathbb{R}^n$ is slope-restricted in $[0,1]$.
	\end{itemize}
\end{proposition}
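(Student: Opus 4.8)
The plan is to verify each item by invoking Lemma~\ref{lem: gradient of convex functions}: it suffices to exhibit, for each activation $\varphi$ (or $\phi$), a function $g$ that is $\alpha$-convex and $\beta$-smooth with $\nabla g = \varphi$, for the claimed $[\alpha,\beta]$. For the scalar cases (a)--(e) this reduces to a one-variable computation: write $\varphi = g'$ for a primitive $g$ and show $\alpha \le \varphi'(x) \le \beta$ wherever $\varphi$ is differentiable, then argue the chord/slope condition \eqref{eq: quadratic constraint 0} holds everywhere (including at the non-differentiable point of $\relu$/leaky~$\relu$) by a limiting or mean-value argument. Concretely: for (a), $g(x)=\tfrac12\max(0,x)^2$ has $g'(x)=\max(0,x)$ and $g''(x)\in\{0,1\}$, so $\relu$ is slope-restricted in $[0,1]$; being monotone with $\varphi(0)=0$, it is also sector-bounded in $[0,1]$. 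For (b), $g(x)=\log(1+e^{x})$ gives $g'(x)=\sigma(x)$ and $g''(x)=\sigma(x)(1-\sigma(x))\in[0,\tfrac14]\subseteq[0,1]$. For (c), $g(x)=\log\cosh(x)$ gives $g'(x)=\tanh(x)$ and $g''(x)=1-\tanh^2(x)\in[0,1]$. For (d), a primitive of leaky $\relu$ is $g(x)=\tfrac12 a x^2\mathbb{I}(x<0)+\tfrac12 x^2\mathbb{I}(x\ge0)$, with $g''(x)\in\{a,1\}\subseteq[\min(a,1),\max(a,1)]$; monotonicity and $\varphi(0)=0$ give the sector bound. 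For (e), a primitive of ELU is $g(x)=\tfrac12 x^2\mathbb{I}(x\ge0)+a(e^{x}-1-x)\mathbb{I}(x<0)$, with $g''(x)=1$ for $x>0$ and $g''(x)=a e^{x}\in(0,a)$ for $x<0$; combined with continuity of $g'$ at $0$ this yields slope restriction in $[0,\max(1,a)]\subseteq[0,1]$ when $a\le1$ (and one checks the stated $[0,1]$ claim holds for the standard $a\le1$ regime, or else states the bound as $[0,\max(1,a)]$), plus the sector bound as before.

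The only structurally different item is (f), the softmax $\phi\colon\mathbb{R}^n\to\mathbb{R}^n$, which is vector-valued; here I would take $g(x)=\log\bigl(\sum_{i=1}^{n}e^{x_i}\bigr)$, the log-sum-exp function, whose gradient is exactly $\nabla g(x)=\phi(x)$. It is a standard fact that $g$ is convex (so $\alpha=0$ works) and that its Hessian is $\nabla^2 g(x)=\mathrm{diag}(\phi(x))-\phi(x)\phi(x)^\top$, which is the covariance matrix of a distribution supported on the standard basis vectors; such a matrix satisfies $0\preceq \nabla^2 g(x)\preceq I_n$ because for any unit vector $v$, $v^\top\nabla^2 g(x) v = \mathbb{E}[v_I^2]-(\mathbb{E}[v_I])^2 = \mathrm{Var}(v_I)\le \mathbb{E}[v_I^2]\le 1$, where $I$ is the index drawn with probabilities $\phi(x)$. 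Hence $g$ is $0$-convex and $1$-smooth, and Lemma~\ref{lem: gradient of convex functions} gives that $\phi$ is slope-restricted in $[0,1]$.

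I would organize the write-up as: (i) a one-line reduction of each ``slope-restricted'' claim to exhibiting the primitive $g$ and bounding $g''$ (scalar) or $\nabla^2 g$ (softmax); (ii) the short table of primitives above; (iii) the covariance argument for the log-sum-exp Hessian; (iv) a remark that each monotone scalar activation with $\varphi(0)=0$ is additionally sector-bounded in the same interval, which is immediate from $(\varphi(x)-\alpha x)(\varphi(x)-\beta x)\le 0$ being equivalent to $\varphi(x)/x\in[\alpha,\beta]$ for $x\ne0$ and trivial at $x=0$. The main obstacle is not any deep step but the edge cases: ensuring the slope condition \eqref{eq: quadratic constraint 0} is valid \emph{globally} for $\relu$ and leaky $\relu$ despite non-differentiability at the origin (handled by noting these functions are limits of, or pointwise dominated by chords of, smooth slope-restricted approximants, or directly: for any two points the chord slope is a convex combination of the two one-sided derivatives), and double-checking the numerical interval claimed in (e) for ELU under the intended parameter range $0<a\le1$ so that the stated $[0,1]$ is correct rather than $[0,\max(1,a)]$.
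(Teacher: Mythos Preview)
Your proposal is correct and follows precisely the route the paper itself indicates: the paper does not give an explicit proof of this proposition but prefaces it with the sentence ``Notably, all commonly-used activation functions for deep neural networks are gradients of convex functions. Therefore, they belong to the class of slope-restricted nonlinearities, according to Lemma~\ref{lem: gradient of convex functions},'' and your write-up simply supplies the primitives $g$ and the Hessian bounds that make this invocation concrete. Your covariance argument for the log-sum-exp Hessian in (f) and your handling of the non-differentiable points in (a) and (d) are the natural details; the ELU caveat you flag (that the stated $[0,1]$ interval in (e) is only valid for $0<a\le 1$, otherwise the correct sector is $[0,\max(1,a)]$) is a genuine imprecision in the paper's statement rather than a flaw in your argument.
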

%
%
%

%
In the context of neural networks, our interest is in \emph{repeated nonlinearities} of the form $\phi(x) = [\varphi(x_1) \ \cdots \ \varphi(x_{n})]^\top.$ Furthermore, the activation values might be bounded from below or above (e.g., the $\relu$ function which outputs a nonnegative value). The sector bound and slope restricted inequalities can become too conservative as they do not capture these properties. In the following, we discuss QCs for these properties.

\subsubsection{Repeated Nonlinearities}
Suppose $\varphi \colon \mathbb{R} \to \mathbb{R}$ is slope-restricted in $[\alpha,\beta]$ ($\alpha \leq \beta$) and let $\phi(x)=[\varphi(x_1)$ $\cdots \varphi(x_n)]^\top$ be a vector-valued function constructed by component-wise repetition of $\varphi$. 
%
%
%
%
%
It is not hard to verify that $\phi$ is also slope-restricted in $[\alpha,\beta]$. Indeed, by summing the slope-restriction conditions
\begin{align*}
(\varphi(x_i)\!-\!\varphi(x_i^\star)-\alpha (x_i\!-\!x_i^{\star}))(\varphi(x_i)\!-\!\varphi(x_i^{\star})\!-\!\beta (x_i\!-\!x_i^{\star})) \!\leq \! 0.
\end{align*}
over $i=1,\cdots,n$, we obtain \eqref{eq: quadratic constraint 0}. However, this representation simply ignores the fact that all the nonlinearities that compose $\phi$ are the same. By taking advantage of this structure, we can refine the quadratic constraint that describes $\phi$. To be specific, for an input-output pair $(x,\phi(x)), \ x \in \mathbb{R}^n$, we can write the inequality
\begin{align} \label{eq: repeated 2}
(\varphi(x_i) \!- \! \varphi(x_j) \!-\!\alpha (x_i\!-\!x_j))(\varphi(x_i) \!- \! \varphi(x_j) \!- \! \beta (x_i \!- \! x_j)) \leq 0,
\end{align}
for all distinct $i,j = 1,\cdots,n,  \ i \neq j$. This particular QC can considerably reduce conservatism, especially for deep networks, as it reasons about \emph{the coupling between the neurons throughout the entire network}. By making an analogy to dynamical systems, we can interpret the neural network as a time-varying discrete-time dynamical system where the same nonlinearity is repeated for all ``time" indexes $k$ (the layer number). Then the QC in \eqref{eq: repeated 2} couples all the possible neurons.
In the following lemma, we characterize QCs for repeated nonlinearities.
\begin{lemma}\emph{\textbf{(QC for repeated nonlinearities)}} \label{lemma: repeated nonlinearities}
	Suppose $\varphi : \mathbb{R} \to \mathbb{R}$ is slope-restricted  in the sector $[\alpha,\beta]$. Then the vector-valued function $\phi(x)=[\varphi(x_1) \ \cdots \varphi(x_n)]^\top$ satisfies the QC
	\begin{align} \label{eq: quadratic constraint}
	\begin{bmatrix}
	x  \\ \phi(x) \\ 1
	\end{bmatrix}^\top  \begin{bmatrix}
	- 2 \alpha \beta T & (\alpha + \beta)T & 0 \\ (\alpha + \beta)T & -2T & 0 \\ 0 & 0 & 0
	\end{bmatrix}  \begin{bmatrix}
	x \\ \phi(x) \\ 1
	\end{bmatrix} \geq 0,
	\end{align}
	for all $x \in \mathbb{R}^n$, where 
	\begin{align} \label{eq: matrix T}
	T= \sum_{1\leq i<j \leq n} \lambda_{ij}(e_i-e_j)(e_i-e_j)^\top, \ \lambda_{ij} \geq 0, 
	\end{align} 
	and $e_i \in \mathbb{R}^n$ is the $i$-th unit vector.
\end{lemma}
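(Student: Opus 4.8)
The plan is to prove the matrix inequality \eqref{eq: quadratic constraint} by expanding it into the scalar form and recognizing it as a nonnegative combination of the pairwise slope-restriction conditions \eqref{eq: repeated 2}. First I would observe that for the specific choice of $T$ in \eqref{eq: matrix T}, the quadratic form on the left-hand side of \eqref{eq: quadratic constraint} decomposes as
\begin{align*}
\sum_{1\leq i<j\leq n} \lambda_{ij}\, q_{ij},
\end{align*}
where $q_{ij}$ is the quadratic form obtained by replacing $T$ with the rank-one matrix $(e_i-e_j)(e_i-e_j)^\top$. So the whole claim reduces, by nonnegativity of the $\lambda_{ij}$ and the fact that a sum of nonnegative terms is nonnegative, to showing that each $q_{ij}\geq 0$ for all $x\in\mathbb{R}^n$.

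The core computation is then to evaluate $q_{ij}$. Writing $u=x$ and $v=\phi(x)$, and using that $(e_i-e_j)(e_i-e_j)^\top$ picks out differences of the $i$-th and $j$-th coordinates, I would check that
\begin{align*}
q_{ij} = -2\alpha\beta (u_i-u_j)^2 + 2(\alpha+\beta)(u_i-u_j)(v_i-v_j) - 2(v_i-v_j)^2,
\end{align*}
which, after dividing by $-2$, is exactly
\begin{align*}
(v_i-v_j-\alpha(u_i-u_j))(v_i-v_j-\beta(u_i-u_j)).
\end{align*}
Since $v_i=\varphi(x_i)$ and $v_j=\varphi(x_j)$, this is precisely the expression in \eqref{eq: repeated 2}, which is $\leq 0$ because $\varphi$ is slope-restricted on $[\alpha,\beta]$ (apply the scalar slope-restriction inequality to the pair $x_i,x_j\in\mathbb{R}$). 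Hence $q_{ij}\geq 0$, and summing with nonnegative weights $\lambda_{ij}$ gives the result.

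The only mildly delicate point — and the place I would be most careful — is the bookkeeping in the block-matrix expansion: confirming that the cross-term block $(\alpha+\beta)T$ appears symmetrically (it does, since $T$ is symmetric) and that the constant row/column of zeros plays no role, so that the $1$ in the augmented vector is irrelevant here. Everything else is routine algebra. I would also note for completeness that the edge cases $\alpha=-\infty$ or $\beta=+\infty$ are handled exactly as in the scalar footnote, replacing the product form with the corresponding single-factor inequality, but since the lemma assumes $\alpha\leq\beta<\infty$ this is not strictly needed. No additional machinery beyond the scalar slope-restriction definition and the conic (nonnegative-combination) structure is required.
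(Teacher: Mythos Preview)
Your proposal is correct and matches the paper's own proof: both arguments reduce the claimed matrix inequality to a nonnegative combination of the pairwise slope-restriction inequalities \eqref{eq: repeated 2}, the only difference being that the paper starts from the scalar inequalities and sums them up, while you start from the aggregate quadratic form and decompose it. The algebraic identity you verify for $q_{ij}$ is exactly the content of the paper's step, so there is nothing to add.
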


\begin{proof}
	By a conic combination of $\binom{n}{2}$ quadratic constraints of the form \eqref{eq: repeated 2}, we obtain \eqref{eq: quadratic constraint}. See Appendix \ref{lemma: repeated nonlinearities proof} for a detailed proof.
\end{proof}
There are several results in the literature about repeated nonlinearities. For instance, in \cite{d2001new,kulkarni2002all}, the authors derive QCs for repeated and odd nonlinearities (e.g. tanh function).
%

\medskip

\subsubsection{Bounded Nonlinearities} Finally, suppose the nonlinear function values are bounded, 
%
%
%
i.e., $\underline \phi \leq \phi(x) \leq \bar \phi$ for all $x \in \mathbb{R}^n$. Using Proposition \ref{prop: QC for hyperrect}, $\phi(x)$ satisfies the quadratic constraint
\begin{align} \label{eq: repeated 4}
\begin{bmatrix}
x \\ \phi(x)  \\ 1
\end{bmatrix}^\top  
\begin{bmatrix} 0 & 0 & 0 \\ 0 & -2D & D(\underline \phi+\bar \phi)\\ 0 & (\underline \phi+\bar \phi)^\top D& -2\underline \phi^\top D\bar \phi \end{bmatrix}  
\begin{bmatrix}
x \\ \phi(x) \\ 1 
\end{bmatrix} \geq 0,
\end{align}
for all $x$, where $D\in \mathbb{R}^{n \times n}$ is diagonal and nonnegative. We can write a similar inequality when the pre- activation values are known to be bounded. More generally, if the graph of $\phi$ is known to satisfy $\mathcal{G}(\phi) \subseteq \mathcal{G}$, then any quadratic constraint for $\mathcal{G}$ is also a valid quadratic constraint for $\phi$.

We observe that the inequalities \eqref{eq: repeated 2}-\eqref{eq: repeated 4} are all quadratic in $(x,\phi(x),1)$, and therefore can be encapsulated into QCs of the form \eqref{eq: QC def}.
%
%
As we show in  $\S$\ref{sec: SDP for One-layer Neural Networks}, the matrix $Q \in \mathcal{Q}_{\phi}$ that abstracts the nonlinearity $\phi$ appears as a decision variable in the SDP.
%

Although the above rules can be used to guide the search for valid QCs for activation functions, a less conservative description of activation functions requires a case-by-case treatment to further exploit the structure of the nonlinearity. 
%
In the next subsection, we elaborate on QCs for $\relu$ activation functions.
\subsection{Quadratic Constraints for $\relu$ Activation Function}
The $\relu$ function precisely lies on the boundary of the sector $[0,1]$. This observation can be used to refine the QC description of $\relu$. Specifically, let $y = \max(\alpha x,\beta x), \ x \in \mathbb{R}^n$ be the concatenation of $n$ $\relu$ activation functions\footnote{For $\relu$, we have $\alpha=0$ and $\beta=1$.}. Then each individual activation function can be described by the following constraints \cite{raghunathan2018semidefinite}:
\begin{align} 
y_i \!=\! \max(\alpha x_i , \beta y_i) \!\iff \! \begin{cases}
(y_i\!-\!\alpha x_i)(y_i\!-\!\beta x_i)\!=\!0 \\  \beta x_i \leq y_i \\ \alpha x_i\leq y_i.
\end{cases}\label{eq: relu qcs}
\end{align}
The first constraint is the boundary of the sector $[\alpha,\beta]$ and the other constraints simply prune these boundaries to recover the $\relu$ function. 
Furthermore, for any two distinct indices $i \neq j$, we can write the constraint \eqref{eq: repeated 2}:
\begin{align} \label{eq: relu qcs 1}
(y_j-y_i-\alpha(x_j-x_i))(y_j-y_i-\beta(x_j-x_i)) \leq 0.
\end{align}
\mahyar{By adding a weighted combination of all these constraints (non-negative weights for inequalities), we find that the function $y = \max(\alpha x,\beta x)$ satisfies
\begin{align} \label{eq: weighted combination}
&\sum_{i=1}^{n}\{\lambda_i (y_i\!-\!\alpha x_i)(y_i-\beta x_i) \!-\! \nu_i (y_i-\beta x_i) - \eta_i (y_i-\alpha x_i)\}+\notag \\
&\sum_{i \neq j} \lambda_{ij} (y_j\!-\!y_i\!-\!\alpha (x_j\!-\!x_i))(y_j\!-\!y_i \!-\! \beta(x_j-x_i))\leq 0,
\end{align}
}%
for all $x \in \mathbb{R}^n$. In the following lemma, we provide a full QC characterization of the $\relu$ function.
\begin{lemma}[Global QC for $\relu$ function]  \label{lem: QC for relu}
	The function $\phi(x) = \max(\alpha x,\beta x)$ satisfies the QC
		\begin{align}  \label{lem: QC for relu 1}
		 \begin{bmatrix}
		 	x \\ \phi(x) \\ 1
		 \end{bmatrix}^\top
		 \begin{bmatrix} Q_{11} & Q_{12} & Q_{13} \\ Q_{12}^\top & Q_{22} & Q_{23} \\ Q_{13}^\top & Q_{23}^\top  & Q_{33}
		\end{bmatrix} 		 \begin{bmatrix}
		x \\ \phi(x) \\ 1
	\end{bmatrix} \geq 0,
	\end{align}
	for all $x \in \mathbb{R}^n$, where
	\begin{align*}
		&Q_{11} = -2 \alpha \beta (\diag( \lambda)+T), \ Q_{12} = (\alpha+\beta)(\diag(\lambda)+T), \\
		&Q_{13} = -\beta \nu- \alpha \eta, \ Q_{22} = - 2(\diag( \lambda)+T), \\ &Q_{23} =  \nu +\eta, \ Q_{33} = 0,
	\end{align*}
	$\nu,\eta \in \mathbb{R}_{+}^n$, and $T$ is given by \eqref{eq: matrix T}.
\end{lemma}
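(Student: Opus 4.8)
The plan is to show that any matrix $Q$ of the stated block form satisfies the defining inequality \eqref{eq: QC def}, i.e.\ that
\[
\begin{bmatrix} x \\ \phi(x) \\ 1 \end{bmatrix}^\top Q \begin{bmatrix} x \\ \phi(x) \\ 1 \end{bmatrix} \geq 0
\]
for all $x \in \mathbb{R}^n$, where $\phi(x) = \max(\alpha x, \beta x)$ componentwise. First I would write $y = \phi(x)$ and expand the quadratic form with the given blocks; the goal is to recognize the result as exactly the negative of the left-hand side of \eqref{eq: weighted combination}. Concretely, the terms involving $\diag(\lambda)$ should collect to $-\sum_i \lambda_i (y_i - \alpha x_i)(y_i - \beta x_i)$; indeed, for a single coordinate, the $2\times 2$ principal block $\begin{bmatrix} -2\alpha\beta & \alpha+\beta \\ \alpha+\beta & -2 \end{bmatrix}$ acting on $(x_i, y_i)$ yields $-2\alpha\beta x_i^2 + 2(\alpha+\beta)x_i y_i - 2 y_i^2 = -2(y_i - \alpha x_i)(y_i - \beta x_i)$, and the factor $\tfrac12$ is absorbed by writing $\diag(\lambda)$ rather than $2\diag(\lambda)$ — I would double-check this normalization carefully against \eqref{eq: matrix T} and \eqref{eq: quadratic constraint}. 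The terms involving $T$ should similarly collect, via Lemma \ref{lemma: repeated nonlinearities}, to $-\sum_{i<j}\lambda_{ij}(y_i - y_j - \alpha(x_i-x_j))(y_i - y_j - \beta(x_i - x_j))$, matching the pairwise sum in \eqref{eq: weighted combination} up to relabeling $(i,j)$. Finally, the blocks $Q_{13} = -\beta\nu - \alpha\eta$ and $Q_{23} = \nu + \eta$, paired with the trailing $1$, produce the linear terms $2\sum_i \nu_i(y_i - \beta x_i)/2 + \ldots$; again I would track the factor-of-two bookkeeping so that these reproduce $-\sum_i\{\nu_i(y_i - \beta x_i) + \eta_i(y_i - \alpha x_i)\}$ after the symmetric off-diagonal blocks are doubled.

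Having matched the quadratic form term-by-term, the argument reduces to: (i) each single-coordinate boundary identity $(y_i - \alpha x_i)(y_i - \beta x_i) = 0$ from \eqref{eq: relu qcs}, so those terms contribute zero regardless of the sign of $\lambda_i$; (ii) each inequality $y_i - \beta x_i \geq 0$ and $y_i - \alpha x_i \geq 0$ from \eqref{eq: relu qcs}, so with $\nu_i, \eta_i \geq 0$ the corresponding subtracted terms are nonpositive, hence $-\nu_i(y_i - \beta x_i) - \eta_i(y_i - \alpha x_i) \le 0$; wait — I need the overall expression $\ge 0$, so I should instead observe that the \emph{quadratic form equals} $-(\text{LHS of }\eqref{eq: weighted combination})$ and LHS of \eqref{eq: weighted combination} is $\le 0$, giving the quadratic form $\ge 0$; and (iii) each pairwise term is $\le 0$ by \eqref{eq: relu qcs 1} with $\lambda_{ij}\ge 0$, using that $\varphi$ is slope-restricted in $[\alpha,\beta]$ so \eqref{eq: repeated 2} holds (this is where Lemma \ref{lemma: repeated nonlinearities} or its proof is invoked directly). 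Summing (i)–(iii) with the stated sign conventions gives LHS of \eqref{eq: weighted combination} $\le 0$, equivalently the quadratic form $\ge 0$, which is the claim.

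The main obstacle — really the only delicate point — is the bookkeeping of factors of $2$ and the symmetrization of off-diagonal blocks. A block-symmetric matrix $\begin{bmatrix} 0 & 0 & a \\ 0 & 0 & b \\ a^\top & b^\top & 0\end{bmatrix}$ contributes $2a^\top x + 2 b^\top y$ to the quadratic form, so the linear coefficients $\nu_i + \eta_i$ in $Q_{23}$ must be reconciled with the coefficients appearing in \eqref{eq: weighted combination}; similarly the $(1,2)$ block $(\alpha+\beta)(\diag(\lambda)+T)$ is counted once in $Q_{12}$ but twice in the form, which is exactly what is needed to turn $-2\alpha\beta x^2 - 2y^2$ into a perfect (signed) product with cross term $2(\alpha+\beta)xy$. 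I would handle this cleanly by first verifying the single-neuron ($n=1$, $T=0$) case by direct expansion, then noting the repeated-nonlinearity block $T$ enters identically to Lemma \ref{lemma: repeated nonlinearities}, so the general case follows by adding the two contributions. I expect no conceptual difficulty beyond this, and would refer to the appendix (paralleling the proof of Lemma \ref{lemma: repeated nonlinearities}) for the full expansion.
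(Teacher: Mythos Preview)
Your proposal is correct and follows essentially the same route as the paper: both identify the quadratic form $[x;\phi(x);1]^\top Q [x;\phi(x);1]$ with (the negative of) the weighted combination \eqref{eq: weighted combination}, using the per-coordinate identities and inequalities \eqref{eq: relu qcs} together with Lemma~\ref{lemma: repeated nonlinearities} for the $T$ block. The only real difference is direction---the paper builds $Q$ up from the scalar constraints while you expand $Q$ and recognize the pieces---and your explicit caution about the factor-of-two bookkeeping is well placed, since that is indeed the only place the computation can slip.
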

\begin{proof}
	See Appendix \ref{lem: QC for relu proof}.
\end{proof}
\subsubsection{Tightening Relaxations}

The QC of Lemma \ref{lem: QC for relu} holds globally for the whole space $\mathbb{R}^n$. When restricted to a local region $\mathcal{X}$, these QCs can be tightened. Specifically, suppose $y=\max(x,0)$ and define $\mathcal{I}^{+}$, $\mathcal{I}^{-}$, and $\mathcal{I}^{\pm}$ as the set of activations that are known to be always active, always inactive, or unknown for all $x \in \mathcal{X} \subseteq \mathbb{R}^{n}$, i.e.,
\begin{align} \label{eq: neurons indices}
\mathcal{I}^{+} &= \{i  \mid x_i \geq 0 \text{ for all } x \in \mathcal{X}\} \\ \notag
\mathcal{I}^{-} &= \{i  \mid x_i < 0 \text{ for all } x \in \mathcal{X}\} \\ \notag
\mathcal{I}^{\pm}&= \{1,\cdots,n\} \setminus (\mathcal{I}^{+} \cup \mathcal{I}^{-}).
\end{align}
Then the function $y_i=\max(\alpha x_i,\beta x_i)$ belongs to the sector $[\alpha,\alpha],\ [\alpha,\beta]$ and $[\beta,\beta]$ for inactive, unknown, and active neurons, respectively.
%
%
Furthermore, since the constraint $y_i \geq \beta x_i$ holds with equality for active neurons, 
we can write
%
$\nu_i \in \mathbb{R} \text{ if } i \in \mathcal{I}^{+}, \ \nu_i \geq 0 \text{ otherwise}.$
%
Similarly, the constraint $y_i \geq \alpha x_i$ holds with equality for inactive neurons. Therefore, we can write
%
$\eta_i \in \mathbb{R} \text{ if } i \in \mathcal{I}^{-}, \ \eta_i \geq 0 \text{ otherwise}$.
%
%
Finally, the chord connecting the input-output pairs of always-active or always-inactive neurons has slope of $\alpha$ or $\beta$. Equivalently, for any $(i,j) \in (\mathcal{I}^{+} \times \mathcal{I}^{+}) \cup (\mathcal{I}^{-} \times \mathcal{I}^{-})$, we can write
\begin{align*}
(\frac{y_j \!- \!y_i}{x_j \!-\!x_i}-\alpha)(\frac{y_j \!-\!y_i}{x_j \!-\!x_i}\!-\!\beta)=0.
\end{align*}
Therefore, in \eqref{eq: weighted combination}, $\lambda_{ij} \in \mathbb{R}$ for $(i,j) \in (\mathcal{I}^{+} \times \mathcal{I}^{+}) \cup (\mathcal{I}^{-} \times \mathcal{I}^{-})$ and $\lambda_{ij} \geq 0$ otherwise. The above additional degrees of freedom on the multipliers can tighten the relaxation incurred in \eqref{eq: weighted combination}. In the following Lemma, we summarize the above observations.

%
%
%

\begin{lemma}\textbf{(Local QC for $\relu$ function)}  \label{lem: local QC for relu}
	Let $\phi(x) = \max(\alpha x,\beta x),\ x \in \mathcal{X} \subset \mathbb{R}^n$ and define $\mathcal{I}^{+}, \mathcal{I}^{-}$ as in \eqref{eq: neurons indices}. Then $\phi$ satisfies the QC
	\begin{align}  \label{lem: local QC for relu 1}
 \begin{bmatrix}
	x \\ \phi(x) \\ 1
\end{bmatrix}^\top
\begin{bmatrix} Q_{11} & Q_{12} & Q_{13} \\ Q_{12}^\top & Q_{22} & Q_{23} \\ Q_{13}^\top & Q_{23}^\top  & Q_{33}
\end{bmatrix} 		 \begin{bmatrix}
	x \\ \phi(x) \\ 1
\end{bmatrix} \geq 0,
	\end{align}
	for all $x \in \mathcal{X}$, where
	\begin{align*}
	&Q_{11} \!= \!-2 \mathrm{diag}(\balpha \circ \bbeta \circ \lambda)  \!-2\alpha \beta T, \\ &Q_{12} = \mathrm{diag}((\balpha+\bbeta)\circ \lambda)+(\alpha+\beta)T \\
	&Q_{13} = -\bbeta \circ \nu -\balpha \circ \eta,\ 
	Q_{22} = -2T  \\ \notag
	&Q_{23} =  \nu +\eta,\ \notag
	Q_{33} = 0,
	\end{align*}
	with $T=\sum_{1\leq i<j \leq n} \lambda_{ij}(e_i-e_j)(e_i-e_j)^\top$ and 
	\begin{align*}
	\balpha &= [\alpha+(\beta-\alpha)\mathbf{1}_{\mathcal{I}^{+}}(1),\cdots,\alpha+(\beta-\alpha)\mathbf{1}_{\mathcal{I}^{+}}(n)] \\ 
	\bbeta &= [\beta-(\beta-\alpha)\mathbf{1}_{\mathcal{I}^{-}}(1),\cdots,\beta-(\beta-\alpha)\mathbf{1}_{\mathcal{I}^{-}}(n)] \\ 
	\nu_i &\in \mathbb{R}_{+} \text{ for } i \notin \mathcal{I}^{+} \\
	\eta_i &\in \mathbb{R}_{+} \text{ for } i  \notin \mathcal{I}^{-} \\
	\lambda_{ij} &\in \mathbb{R}_{+} \text{ for }  \{i,j\} \notin (\mathcal{I}^{+} \! \times \! \mathcal{I}^{+}) \! \cup \! (\mathcal{I}^{-} \! \times \! \mathcal{I}^{-}).
	\end{align*}
	%
\end{lemma}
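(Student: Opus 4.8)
The plan is to mimic the proof of Lemma~\ref{lem: QC for relu}, but to sharpen each of the three constraint families \eqref{eq: relu qcs}--\eqref{eq: relu qcs 1} using the activation pattern in \eqref{eq: neurons indices}, and to enlarge the set of admissible multipliers accordingly. Write $y=\phi(x)$ and $y_i=\max(\alpha x_i,\beta x_i)$, and recall the guiding principle: a multiplier attached to a constraint that happens to hold \emph{with equality} on all of $\mathcal{X}$ may be taken sign-indefinite, since such a term contributes zero to any conic combination regardless of its sign.

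First I would record which constraints degenerate to equalities on $\mathcal{X}$. (i) For $i\in\mathcal{I}^{+}$ we have $x_i\geq 0$ throughout $\mathcal{X}$, hence $y_i=\beta x_i$ identically there; similarly $y_i=\alpha x_i$ identically for $i\in\mathcal{I}^{-}$; and for $i\in\mathcal{I}^{\pm}$ we still have $y_i\in\{\alpha x_i,\beta x_i\}$ pointwise. Therefore $(y_i-\balpha_i x_i)(y_i-\bbeta_i x_i)=0$ on $\mathcal{X}$, with $\balpha,\bbeta$ exactly the vectors in the statement (the per-neuron sector collapses to $[\beta,\beta]$, $[\alpha,\alpha]$, or $[\alpha,\beta]$), so the diagonal multipliers $\lambda_i$ may be sign-indefinite. (ii) The pruning constraint $y_i-\beta x_i\geq 0$ holds for all $x$, with equality on $\mathcal{X}$ iff $i\in\mathcal{I}^{+}$; symmetrically $y_i-\alpha x_i\geq 0$, with equality on $\mathcal{X}$ iff $i\in\mathcal{I}^{-}$. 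Hence $\nu_i$ is sign-indefinite for $i\in\mathcal{I}^{+}$ and nonnegative otherwise, and $\eta_i$ is sign-indefinite for $i\in\mathcal{I}^{-}$ and nonnegative otherwise. (iii) For $i\neq j$ the repeated-nonlinearity constraint \eqref{eq: relu qcs 1} always holds; if $\{i,j\}\subseteq\mathcal{I}^{+}$ then $y_j-y_i=\beta(x_j-x_i)$ on $\mathcal{X}$, so the factor $(y_j-y_i-\beta(x_j-x_i))$ vanishes identically, and symmetrically the factor $(y_j-y_i-\alpha(x_j-x_i))$ vanishes when $\{i,j\}\subseteq\mathcal{I}^{-}$ (the input-output chord has slope exactly $\beta$, resp.\ $\alpha$). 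Thus $\lambda_{ij}$ is sign-indefinite on $(\mathcal{I}^{+}\times\mathcal{I}^{+})\cup(\mathcal{I}^{-}\times\mathcal{I}^{-})$ and nonnegative otherwise.

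Next I would form the combination of \eqref{eq: weighted combination} with the single-neuron term replaced by $\sum_i\lambda_i(y_i-\balpha_i x_i)(y_i-\bbeta_i x_i)$ and with the multipliers restricted to the sets identified above. On $\mathcal{X}$, every term indexed by one of the equalities in (i)--(iii) is identically zero for any sign of its multiplier, while each remaining term is a nonnegative multiplier times a nonpositive quantity; hence the combination is $\leq 0$ for all $x\in\mathcal{X}$. Negating it and collecting the outcome as a quadratic form gives the desired inequality $\begin{bmatrix}x\\\phi(x)\\1\end{bmatrix}^\top Q\begin{bmatrix}x\\\phi(x)\\1\end{bmatrix}\geq 0$; a direct expansion — identical in pattern to the one in the proof of Lemma~\ref{lem: QC for relu}, with $T$ as in \eqref{eq: matrix T} absorbing the $i\neq j$ contributions and the $\mathrm{diag}(\cdot)$ blocks the $i=j$ contributions — shows that $Q$ has exactly the stated block form (up to the usual rescaling of the cone's multipliers, which does not change $\mathcal{Q}_{\phi}$). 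The Hadamard products $\balpha\circ\bbeta\circ\lambda$, $(\balpha+\bbeta)\circ\lambda$, $\bbeta\circ\nu$, $\balpha\circ\eta$ appear precisely because the single-neuron sector endpoints are now neuron-dependent.

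I expect the only non-mechanical step to be part (iii): recognizing that always-active (resp.\ always-inactive) neurons have input-output chords of slope exactly $\beta$ (resp.\ $\alpha$), which is what collapses the corresponding factor in \eqref{eq: relu qcs 1} to zero and frees the sign of $\lambda_{ij}$; the analogous but easier observations for the single-neuron and pruning constraints give the sign relaxations on $\lambda_i,\nu_i,\eta_i$. Everything afterwards — verifying the combination is $\leq 0$ on $\mathcal{X}$ and matching the quadratic form to the claimed matrix — is routine bookkeeping parallel to the global case, the only additional care being to propagate the per-neuron sector bounds $(\balpha_i,\bbeta_i)$ and the per-neuron/per-pair multiplier domains through the $\mathrm{diag}(\cdot)$ and $T$ terms.
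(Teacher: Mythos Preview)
Your proposal is correct and follows essentially the same approach as the paper: case-split the three constraint families in \eqref{eq: relu qcs}--\eqref{eq: relu qcs 1} according to the activation pattern $\mathcal{I}^{+},\mathcal{I}^{-},\mathcal{I}^{\pm}$, observe which constraints degenerate to equalities on $\mathcal{X}$ (thereby freeing the sign of the attached multiplier), take a weighted combination, and rearrange into the block form of $Q$. One small point of care: to land exactly on $Q_{13}=-\bbeta\circ\nu-\balpha\circ\eta$ you must use the per-neuron endpoints $\balpha_i,\bbeta_i$ in the pruning terms as well (not just in the sector term), which the paper does in its combined inequality; your step (ii) is written with the global $\alpha,\beta$, so make sure the expansion you actually carry out uses $\nu_i(y_i-\bbeta_i x_i)$ and $\eta_i(y_i-\balpha_i x_i)$.
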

\begin{proof}
	See Appendix \ref{lem: QC for relu proof}.
\end{proof}

\mahyar{We do not know \emph{a priori} which neurons are always active or always inactive. However, we can partially find them by computationally cheap presolve steps. Specifically, if $x$ is known to satisfy $\underline{x} \leq x \leq \bar{x}$ (bounds on the pre-activation values), then we have $\mathcal{I}^{+} = \{i \mid \underline{x}_i \geq 0 \}$, $\mathcal{I}^{-} = \{i \mid \bar{x}_i < 0 \}$, and $\mathcal{I}^{\pm} = \{i \mid \bar{x}_i \underline{x}_i \leq 0 \}$. These element-wise bounds can be found by, for example, interval bound propagation \cite{gowal2018effectiveness,cheng2017maximum} or the LP approach of \cite{kolter2017provable}. Indeed, tighter bounds result in a less conservative description of the $\relu$ function outlined in Lemma \ref{lem: local QC for relu}.}

	

\subsection{Other Activation Functions}
Deriving non-conservative QCs for other activation functions (other than $\relu$) is more complicated as they are not on the boundary of any sector. However, by bounding these functions at multiple points by sector bounds of the form \eqref{eq: sector bound}, we can obtain a substantially better over-approximation. In Figure \ref{fig:tanh approx}, we illustrate this idea for the $\tanh$ function.

A secondary approach is to use the element-wise bounds on the inputs to the activation functions to use a tighter sector bound condition in \eqref{eq: sector bound}. For instance, suppose $x \in [\underline{x},\bar{x}] \subseteq \mathbb{R}$. Then the function $\varphi(x)=\tanh(x)$ satisfies the sector condition in \eqref{eq: sector bound}, where $\alpha$ and $\beta$ are given by
\begin{align*}
\alpha &= \begin{cases}
\tanh(\bar{x})/\bar{x}& \text{if } \underline{x} \bar{x} \geq 0 \\
\min(\tanh(\underline{x})/\underline{x},\tanh(\bar{x})/\bar{x}) & \text{otherwise}.
\end{cases} \\ \notag
\beta &= \begin{cases}
\tanh(\underline{x})/\underline{x}& \text{if } \underline{x} \bar{x} \geq 0 \\
1 & \text{otherwise}.
\end{cases}
\end{align*}

More generally, suppose the graph of $\varphi \colon [\underline{x},\bar{x}] \to \mathbb{R}$ is known to satisfy $\mathcal{G}(\varphi) \subseteq \mathcal{G} \subset \mathbb{R}^2$. Then any QC satisfied by $\mathcal{G}$ is also a valid QC for $\varphi$. We can use this property to build local quadratic constraints for general activation functions provided that we can overapproximate their graph locally. This idea is illustrated in Figure \ref{fig:tanh approx} for the case of $\tanh$ function.  


\begin{figure}
	\centering
	\includegraphics[width=\linewidth]{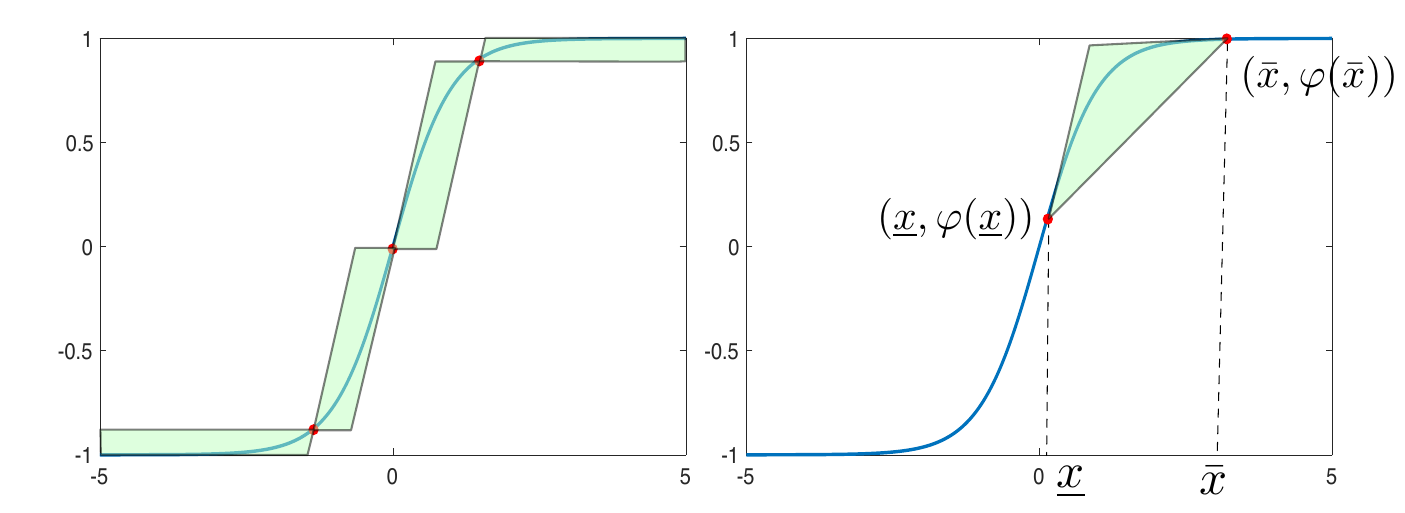}
	\caption{\small (Left) The curve of the tanh function overapproximated on $\mathbb{R}$ by the intersection of three sectors. (Right) The curve of the tanh function overapproximated on $[\underline{x},\bar{x}]$ by a polytope.}
	\label{fig:tanh approx}
\end{figure}

\section{Neural Network Verification Via Semidefinite Programming} \label{sec: SDP for One-layer Neural Networks}
In the previous section, we developed an abstraction of sets and nonlinearities using quadratic constraints. In this section, we use this abstraction to develop an LMI feasibility problem that can assert whether $f(\mathcal{X})\subseteq \mathcal{S}_y$ (or $\mathcal{X}\subseteq \mathcal{S}_x =f^{-1}(\mathcal{S}_y)$). The crux of our idea in the development of the LMI is the $\mathcal{S}$-procedure \cite{yakubovich1997s}, a technique to reason about multiple quadratic constraints, and is frequently used in robust control and optimization \cite{boyd1994linear,ben2009robust}. 
\subsection{Single-layer Neural Networks}
 For the sake of simplicity in the exposition, we start with the analysis of one-layer neural networks and then extend the results to the multi-layer case in $\S$\ref{sec: Multi-layer networks}. We further assume that the safe set $\mathcal{S}_x$ in \eqref{eq: safety set spec} is specified by a single quadratic form, i.e., $m=1$. We state our main result in the following theorem.
\begin{theorem}[SDP for one layer] \label{thm: main result one layer}
	Consider a one-layer neural network $f \colon \mathbb{R}^{n_x} \to \mathbb{R}^{n_f}$ described by the equation
	\begin{align} \label{eq: one layer neural net thm}
	f(x)= W^1 \phi(W^0 x + b^0) + b^1,
	\end{align}
	Suppose $x \in \mathcal{X} \subset \mathbb{R}^{n_x}$, where $\mathcal{X}$ satisfies the QC defined by $\mathcal{P}_{\mathcal{X}}$, i.e., for any $P \in \mathcal{P}_{\mathcal{X}}$, 
	\begin{align} \label{thm: main result one layer 0.5}
	\begin{bmatrix}
	x \\ 1
	\end{bmatrix}^\top P \begin{bmatrix}
	x \\ 1
	\end{bmatrix} \geq 0 \quad \text{for all } x \in \mathcal{X}.
	\end{align}
	Let $\mathcal{Z} = \{z \mid z = W^0 x + b^0, \ x\in \mathcal{X}\}$ and suppose $\phi$ satisfies the QC defined by $\mathcal{Q}_{\phi}$ on $\mathcal{Z}$, i.e., for any $Q \in \mathcal{Q}_{\phi}$,
	\begin{align} \label{thm: main result one layer 1}
		\begin{bmatrix}
			z \\ \phi(z) \\ 1
		\end{bmatrix}^\top Q     \begin{bmatrix}
			z \\ \phi(z) \\ 1
		\end{bmatrix} \geq 0 \quad \text{for all } z \in \mathcal{Z}.
	\end{align}
	%
	%
	Consider the following matrix inequality,
	\begin{align}  \label{thm: main result one layer 2}
	M_{\mathrm{in}}(P) + M_{\mathrm{mid}}(Q) + M_{\mathrm{out}}(S) \preceq 0,
	\end{align}
	where
	\begin{subequations}
		\begin{align}  \label{thm: main result one layer 3}
		M_{\mathrm{in}}(P) &=    \begin{bmatrix}
		I_{n_0} & 0 \\ 0 & 0 \\ 0 & 1
		\end{bmatrix} P \begin{bmatrix}
		I_{n_0} & 0 & 0 \\ 0  & 0 & 1
		\end{bmatrix} \\
		M_{\mathrm{mid}}(Q) &= \begin{bmatrix}
		{W^0}^\top  & 0 & 0 \\ 0 & I_{n_1} & 0 \\ {b^0}^\top & 0 & 1
		\end{bmatrix} Q  \begin{bmatrix}
		{W^0}  & 0 & b^0 \\ 0 & I_{n_1} & 0 \\ 0 & 0 & 1
		\end{bmatrix} \\
		M_{\mathrm{out}}(S) &=\begin{bmatrix}
		I_{n_0} & 0 & 0 \\ 0 & {W^1}^\top & 0 \\ 0 & {b^1}^\top & 1
		\end{bmatrix} S \begin{bmatrix}
		I_{n_0} & 0 & 0\\ 0 & W^1 & b^1 \\ 0 & 0 & 1
		\end{bmatrix},
		\end{align}
	\end{subequations}
	and $S \in \mathbb{S}^{n_x+n_f+1}$ is a given symmetric matrix. If \eqref{thm: main result one layer 2} is feasible for some $P \in \mathcal{P}_{\mathcal{X}}, \ Q \in \mathcal{Q}_{\phi}$, then 
		$$
		 \begin{bmatrix}
		x \\ f(x) \\ 1
		\end{bmatrix}^\top S     \begin{bmatrix}
		x \\ f(x) \\ 1
		\end{bmatrix} \leq 0 \text{ for all } x \in \mathcal{X}.
 		$$
\end{theorem}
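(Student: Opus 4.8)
The plan is to apply the $\mathcal{S}$-procedure in its most direct form: fix an arbitrary admissible input, lift it to a single vector that simultaneously exposes all three quadratic forms appearing in \eqref{thm: main result one layer 2}, and then read off safety from the sign of the resulting scalar inequality.

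First I would fix an arbitrary $x \in \mathcal{X}$, set $z = W^0 x + b^0$ for the hidden-layer pre-activation (so that $f(x) = W^1 \phi(z) + b^1$), and form the lifted vector $\xi = \begin{bmatrix} x \\ \phi(z) \\ 1 \end{bmatrix} \in \mathbb{R}^{n_0 + n_1 + 1}$. The point is that each block matrix in \eqref{thm: main result one layer 3} is a congruence, $M_{\mathrm{in}}(P) = E_{\mathrm{in}}^\top P E_{\mathrm{in}}$, $M_{\mathrm{mid}}(Q) = E_{\mathrm{mid}}^\top Q E_{\mathrm{mid}}$, $M_{\mathrm{out}}(S) = E_{\mathrm{out}}^\top S E_{\mathrm{out}}$, where $E_{\mathrm{in}} = \begin{bmatrix} I_{n_0} & 0 & 0 \\ 0 & 0 & 1 \end{bmatrix}$, $E_{\mathrm{mid}} = \begin{bmatrix} W^0 & 0 & b^0 \\ 0 & I_{n_1} & 0 \\ 0 & 0 & 1 \end{bmatrix}$, $E_{\mathrm{out}} = \begin{bmatrix} I_{n_0} & 0 & 0 \\ 0 & W^1 & b^1 \\ 0 & 0 & 1 \end{bmatrix}$ are exactly the right factors displayed in \eqref{thm: main result one layer 3}. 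Evaluating these at $\xi$ gives $E_{\mathrm{in}} \xi = \begin{bmatrix} x \\ 1 \end{bmatrix}$, $E_{\mathrm{mid}} \xi = \begin{bmatrix} z \\ \phi(z) \\ 1 \end{bmatrix}$ (using $W^0 x + b^0 = z$), and $E_{\mathrm{out}} \xi = \begin{bmatrix} x \\ f(x) \\ 1 \end{bmatrix}$ (using $W^1 \phi(z) + b^1 = f(x)$). Hence $\xi^\top M_{\mathrm{in}}(P) \xi$, $\xi^\top M_{\mathrm{mid}}(Q)\xi$, $\xi^\top M_{\mathrm{out}}(S)\xi$ are, respectively, the $P$-form evaluated at $x$, the $Q$-form at $(z,\phi(z))$, and the $S$-form at $(x,f(x))$. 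This identification is the only step that needs care, and it is pure bookkeeping with the block dimensions.

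Next I would sign the first two terms using the hypotheses: since $x \in \mathcal{X}$ and $P \in \mathcal{P}_{\mathcal{X}}$, inequality \eqref{thm: main result one layer 0.5} gives $\xi^\top M_{\mathrm{in}}(P)\xi \geq 0$; since $Q \in \mathcal{Q}_{\phi}$ and \eqref{thm: main result one layer 1} holds for all $z \in \mathbb{R}^{n_1}$, we get $\xi^\top M_{\mathrm{mid}}(Q)\xi \geq 0$. Multiplying the assumed matrix inequality \eqref{thm: main result one layer 2} on the left by $\xi^\top$ and on the right by $\xi$ gives
$$\xi^\top M_{\mathrm{in}}(P)\xi + \xi^\top M_{\mathrm{mid}}(Q)\xi + \xi^\top M_{\mathrm{out}}(S)\xi \leq 0,$$
and dropping the two nonnegative summands leaves $\begin{bmatrix} x \\ f(x) \\ 1 \end{bmatrix}^\top S \begin{bmatrix} x \\ f(x) \\ 1 \end{bmatrix} \leq 0$, i.e., $x \in \mathcal{S}_x$. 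Since $x \in \mathcal{X}$ was arbitrary, $\mathcal{X} \subseteq \mathcal{S}_x$.

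I do not anticipate a genuine obstacle. The only thing to check carefully is that the right factors in \eqref{thm: main result one layer 3} were designed so that a single lifted vector $\xi$ reproduces all three constraint vectors at once — equivalently, that the recursion $z = W^0 x + b^0$ and the output map $f(x) = W^1 \phi(z) + b^1$ are correctly embedded in the off-diagonal blocks of $E_{\mathrm{mid}}$ and $E_{\mathrm{out}}$. Once that is confirmed, the conclusion is immediate from nonnegativity of the first two forms; the same template extends verbatim to the multi-layer and multi-constraint settings by stacking all pre-activations into $\xi$ and taking nonnegative combinations of the quadratic constraints.
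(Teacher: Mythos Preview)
Your proposal is correct and follows essentially the same argument as the paper's proof: both form the lifted vector $[x^0;\,x^1;\,1]$ (your $\xi$, with $x^1=\phi(z)$), multiply the LMI \eqref{thm: main result one layer 2} on both sides by this vector, and use the QC hypotheses \eqref{thm: main result one layer 0.5} and \eqref{thm: main result one layer 1} to sign the first two quadratic forms, forcing the $S$-form to be nonpositive. Your explicit naming of the congruence maps $E_{\mathrm{in}}, E_{\mathrm{mid}}, E_{\mathrm{out}}$ is a minor notational convenience but the logic is identical.
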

\begin{proof} \normalfont
	See Appendix \ref{thm: main result one layer proof}.
\end{proof}

Theorem \ref{thm: main result one layer} states that if the matrix inequality \eqref{thm: main result one layer 2} is feasible for some $(P,Q) \in \mathcal{P}_{\mathcal{X}} \times \mathcal{Q}_{\phi}$, then we can certify that the network $\mathcal{X} \subseteq \mathcal{S}_x$ or $f(\mathcal{X}) \subseteq \mathcal{S}_y$. Since $\mathcal{P}_{\mathcal{X}}$ and $\mathcal{Q}_{\phi}$ are both convex, \eqref{thm: main result one layer 2} is a linear matrix inequality (LMI) feasibility problem and, hence, can be efficiently solved via interior-point method solvers for convex optimization.

\begin{remark}[End-to-end QC for neural network] \normalfont It follows from the proof of Theorem \ref{thm: main result one layer} that, in view of Definition \ref{eq: QC def 0}, the neural network in \eqref{eq: one layer neural net thm} satisfies the QC defined by $(\mathcal{X},\mathcal{Q}_f)$, where 
	\begin{align}
	\mathcal{Q}_f \!=\! \{Q_f \!\mid \! \exists Q \in \mathcal{Q}_{\phi} \text{ s.t. } M_{\mathrm{mid}}(Q) \preceq M_{\mathrm{out}}(Q_f)\}.
	\end{align}
	In other words, for any $Q_f \in \mathcal{Q}_f$ we have
	$$
		\begin{bmatrix}
	x \\ f(x) \\ 1
	\end{bmatrix}^\top Q_f     \begin{bmatrix}
	x \\ f(x) \\ 1
	\end{bmatrix} \geq 0 \quad \text{for all } x \in \mathcal{X}.
	$$
\end{remark}


\subsection{Multi-layer Neural Networks} \label{sec: Multi-layer networks}
We now turn to multi-layer neural networks. Assuming that all the activation functions are the same across the layers (repetition across layers), we can concatenate all the pre- and post-activation signals together and form a more compact representation. To see this, we first introduce $\bbx = [{x^0}^\top  \cdots  {x^{\ell}}^\top]^\top \in \mathbb{R}^{n_0+n}$, where $\ell \geq 1$ is the number of hidden layers. We further define the entry selector matrices $\bE^k \in \mathbb{R}^{n_k \times (n_0+n)}$ such that $x^k = \bE^k \bx$ for $k=0,\cdots,\ell$. Then, we can write \eqref{eq: DNN model 0} compactly as
\begin{subequations} \label{eq: multi layer neural net}
	\begin{align} 
	x \!=\! \mathbf{E}^0 \bbx, \ \bB \bbx  = \phi(\bA \bbx + \bb), \ 
	f(x) \!=\! W^{\ell}\bE^{\ell}\bbx + b^{\ell},
	\end{align}
	where 
	\begin{alignat}{2}
	\bA&= \begin{bmatrix} W^0 & 0 & \cdots & 0 & 0 \\ 0 & W^1 & \cdots & 0 & 0 \\ \vdots & \vdots & \ddots & \vdots & \vdots \\ 0 & 0 & \cdots & W^{\ell-1} & 0 \end{bmatrix}
	\quad \bb &= \begin{bmatrix}
	b^0 \\ b^1 \\ \vdots \\ b^{\ell-1}
	\end{bmatrix} \\
	\bB &= \begin{bmatrix} \  0 & \ \ I_{n_1} & \cdots & 0 & 0 \\ \ \vdots & \ \vdots & \ \ddots & \ \vdots & \vdots \\ \ 0 & \ 0 & \cdots & I_{n_{\ell-1}} & 0 \\ \ 0 & \ 0 & \cdots & 0 & I_{n_{\ell}} \end{bmatrix}. \notag
	\end{alignat}
\end{subequations}
In the following result, we develop the multi-layer counterpart of Theorem \ref{thm: main result one layer} for the multi-layer neural network in \eqref{eq: multi layer neural net}.
\begin{theorem}[SDP for multiple layers] \label{thm: main result multi layer}
	Consider the multi-layer neural network described by \eqref{eq: multi layer neural net}. Suppose $\mathcal{X} \subset \mathbb{R}^{n_x}$ satisfies the QC defined by $\mathcal{P}_{\mathcal{X}}$. Define $\mathcal{Z}= \{\bA\bx +\bb \mid x \in \mathcal{X}\}$ and suppose $\phi$ satisfies the QC defined by $\mathcal{Q}_{\phi}$ on $\mathcal{Z}$.
	Consider the following LMI.
	\begin{align} \label{thm: main result multi layer 1}
	M_{\mathrm{in}}(P) \! + \! M_{\mathrm{mid}}(Q) \!+\! M_{\mathrm{out}}(S) \! \preceq \! 0,
	\end{align}
	where 
	\begin{subequations}
		\begin{align}
		M_{\mathrm{in}}(P) &=\begin{bmatrix}
			\bE^0 & 0 \\  0 & 1
		\end{bmatrix}^\top P \begin{bmatrix}
			\bE^0 & 0 \\  0 & 1
		\end{bmatrix}    \\
		M_{\mathrm{mid}}(Q) &= \begin{bmatrix}
		\bA & \bb \\ \bB & 0 \\ 0 & 1
		\end{bmatrix}^\top Q \begin{bmatrix}
		\bA & \bb \\ \bB & 0 \\ 0 & 1
		\end{bmatrix} \\
		M_{\mathrm{out}}(S) &= \begin{bmatrix}
		\bE^0 & 0 \\ W^{\ell} \bE^{\ell} & b^{\ell} \\ 0 & 1
		\end{bmatrix}^\top S \begin{bmatrix}
		\bE^0 & 0 \\ W^{\ell} \bE^{\ell} & b^{\ell} \\ 0 & 1
		\end{bmatrix},
		\end{align}
	\end{subequations}
	and $S \in \mathbb{S}^{n_x+n_f+1}$ is a given symmetric matrix. If \eqref{thm: main result multi layer 1} is feasible for some $(P,Q) \in \mathcal{P}_{\mathcal{X}} \times \mathcal{Q}_{\phi}$, then
	\begin{align} \label{thm: main result multi layer 2}
		\begin{bmatrix}
	x \\ f(x) \\ 1
	\end{bmatrix}^\top S     \begin{bmatrix}
	x \\ f(x) \\ 1
	\end{bmatrix} \leq 0 \quad \text{for all } x \in \mathcal{X}.
	\end{align}
\end{theorem}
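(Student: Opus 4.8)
The plan is to mimic the single-layer argument of Theorem~\ref{thm: main result one layer}, but using the compact representation~\eqref{eq: multi layer neural net} so that the entire network collapses into one quadratic constraint. First I would fix any $x \in \mathcal{X}$ and let $\bbx = [{x^0}^\top \cdots {x^\ell}^\top]^\top$ be the associated vector of pre-/post-activation signals generated by running~\eqref{eq: DNN model 0}. The key observation is that $\bbx$ satisfies three quadratic relations simultaneously: (i) since $x = \bE^0 \bbx \in \mathcal{X}$, the input QC~\eqref{thm: main result one layer 0.5} gives $[\, (\bE^0\bbx)^\top \ 1\,] P [\, (\bE^0 \bbx)^\top \ 1 \,]^\top \ge 0$, which after stacking equals $[\bbx^\top \ 1] M_{\mathrm{in}}(P) [\bbx^\top \ 1]^\top \ge 0$; (ii) setting $z = \bA\bbx + \bb$, the model equations give $\phi(z) = \bB\bbx$, so the nonlinearity QC~\eqref{thm: main result one layer 1} applied to this particular $z$ yields $[\, z^\top \ \phi(z)^\top \ 1\,] Q [\cdots]^\top = [\bbx^\top \ 1] M_{\mathrm{mid}}(Q) [\bbx^\top \ 1]^\top \ge 0$; (iii) trivially $[\bbx^\top \ 1] M_{\mathrm{out}}(S) [\bbx^\top \ 1]^\top = [\, x^\top \ f(x)^\top \ 1\,] S [\, x^\top \ f(x)^\top \ 1\,]^\top$, because $W^\ell \bE^\ell \bbx + b^\ell = f(x)$.

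Next I would invoke the $\mathcal{S}$-procedure logic: multiply the quadratic form $[\bbx^\top \ 1](M_{\mathrm{in}}(P) + M_{\mathrm{mid}}(Q) + M_{\mathrm{out}}(S))[\bbx^\top \ 1]^\top$ out. By feasibility of~\eqref{thm: main result multi layer 1} this expression is $\le 0$ for every vector, in particular for our augmented $[\bbx^\top \ 1]^\top$. Combining with (i) and (ii) — both of which are $\ge 0$ — we conclude that the $M_{\mathrm{out}}(S)$ term must be $\le 0$, i.e.,
\begin{align*}
\begin{bmatrix} x \\ f(x) \\ 1 \end{bmatrix}^\top S \begin{bmatrix} x \\ f(x) \\ 1 \end{bmatrix} \le - [\bbx^\top \ 1] M_{\mathrm{in}}(P) [\bbx^\top \ 1]^\top - [\bbx^\top \ 1] M_{\mathrm{mid}}(Q) [\bbx^\top \ 1]^\top \le 0.
\end{align*}
Since $x \in \mathcal{X}$ was arbitrary, this establishes~\eqref{thm: main result multi layer 2}, which is exactly the claim. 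If one additionally wants $\mathcal{X} \subseteq \mathcal{S}_x$ in the sense of~\eqref{eq: safety set spec}, that follows immediately from the definition of $\mathcal{S}_x$ once $m=1$ (and the multi-$S_i$ case is handled by taking a conic combination, i.e., replacing $S$ by $\sum_i \tau_i S_i$ with $\tau_i \ge 0$).

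The only genuinely non-routine step is verifying the algebraic identities that define $M_{\mathrm{in}}$, $M_{\mathrm{mid}}$, $M_{\mathrm{out}}$: namely that congruence-transforming $P$, $Q$, $S$ by the indicated selector/weight matrices reproduces exactly the three quadratic forms in $\bbx$ listed above. This is a bookkeeping exercise in block-matrix multiplication — one checks that $\begin{bmatrix} \bA & \bb \\ \bB & 0 \\ 0 & 1 \end{bmatrix} \begin{bmatrix} \bbx \\ 1 \end{bmatrix} = \begin{bmatrix} \bA\bbx+\bb \\ \bB\bbx \\ 1 \end{bmatrix} = \begin{bmatrix} z \\ \phi(z) \\ 1\end{bmatrix}$ and similarly for the other two blocks — but it is where all the structure of the compact representation~\eqref{eq: multi layer neural net} is used, so I would carry it out carefully in the appendix. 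Everything else is a direct transcription of the $\mathcal{S}$-procedure argument from the one-layer proof.
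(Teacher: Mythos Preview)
Your proposal is correct and essentially identical to the paper's proof: both left- and right-multiply the LMI~\eqref{thm: main result multi layer 1} by $[\bbx^\top \ 1]$ and its transpose, identify the three resulting quadratic forms via the compact representation~\eqref{eq: multi layer neural net}, and use the nonnegativity of the first two (from $P \in \mathcal{P}_{\mathcal{X}}$ and $Q \in \mathcal{Q}_{\phi}$) to force the $M_{\mathrm{out}}(S)$ term to be nonpositive. One small correction to your final aside: for multiple specifications $S_1,\dots,S_m$, replacing $S$ by a conic combination $\sum_i \tau_i S_i$ would only certify the \emph{aggregate} inequality, not each $S_i$ separately; the paper instead solves $m$ separate LMIs, one per $S_i$ (see Remark~\ref{remark: more than one spec}).
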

\begin{proof} \normalfont
	See Appendix \ref{thm: main result multi layer proof}.
\end{proof}
\begin{remark} \normalfont  \label{remark: more than one spec}
	For the case that the safe set is characterized by more than one quadratic inequality, i.e., when $m>1$ in \eqref{eq: safety set spec}, then $\mathcal{X} \subseteq \mathcal{S}_x$ if the following LMIs,
	\begin{align}  \label{thm: more than one spec}
	M_{\mathrm{in}}(P_i) + M_{\mathrm{mid}}(Q_i) + M_{\mathrm{out}}(S_i) \preceq 0 \ i=1,\cdots,m,
	\end{align}
	hold for some $P_i \in \mathcal{P}_{\mathcal{X}}$ and $Q_i \in \mathcal{Q}_{\phi}$.
\end{remark}
\section{Optimization Over the Abstracted Network} \label{sec: Optimization Over the Abstracted Network}
In the previous section, we developed an LMI feasibility problem as a sufficient to verify the safety of the neural network. We can incorporate this LMI as a constraint of an optimization problem to solve problems beyond safety verification. Specifically, we can define the following SDP,
\begin{alignat}{2} \label{eq: SDP over abstracted network}
& \mathrm{minimize} && \quad g(P,Q,S) \\
& \mathrm{subject \ to} && \quad M_{\mathrm{in}}(P) + M_{\mathrm{mid}}(Q) + M_{\mathrm{out}}(S) \preceq 0 \notag \\
& && \quad (P,Q,S) \in \mathcal{P}_{\mathcal{X}} \times \mathcal{Q}_{\phi} \times \mathcal{S}, \notag
\end{alignat}
where $g(P,Q,S)$ is a convex function of $P,Q,S$, and $\mathcal{S}$ is a convex subset of $\mathbb{S}^{n_x+n_f+1}$. In the following, we allude to some utilities of the SDP \eqref{eq: SDP over abstracted network}, which we call $\deepsdp$.
\subsection{Reachable Set Estimation} \label{subsec: Certified Upper Bounds}
In Theorem \ref{thm: main result one layer}, we developed a feasibility problem to assert whether $\mathcal{X} \subseteq \mathcal{S}_x$, or equivalently, $f(\mathcal{X}) \subseteq \mathcal{S}_y$. By parameterizing $\mathcal{S}_x$, we can find the best over approximation of $f(\mathcal{X})$ by solving \eqref{eq: SDP over abstracted network}. Suppose $\mathcal{S}_x$ is described by
%
$\mathcal{S}_x = \{x \mid c^\top f(x) - d \leq 0\}$
%
 with a given $c \in \mathbb{R}^{n_f}$ and $d \in \mathbb{R}$. By defining
\begin{align} \label{eq: S for hyperplance}
S= \begin{bmatrix} 0& 0 & 0 \\ 0 & 0 & c \\ 0 &c^\top &  -2d\end{bmatrix},
\end{align}
the feasibility of \eqref{thm: main result multi layer 1} for some $(P,Q) \in \mathcal{P}_{\mathcal{X}} \times \mathcal{Q}_{\phi}$ implies $c^\top f(x) \leq d \quad \text{for all } x \in \mathcal{X}$. In other words, $d$ is a certified upper bound on the optimal value of the optimization problem
%
%
\begin{align} \label{eq: certified upper bound}
\mathrm{maximize} \ c^\top f(x) \quad \mathrm{subject \ to} \ x \in \mathcal{X}.
\end{align}
Now if we treat $d \in \mathbb{R}$ as a decision variable, we can minimize this bound by solving \eqref{eq: SDP over abstracted network} with $g(P,Q,S)=d$. 
This is particularly useful for over approximating the reachable set $f(\mathcal{X})$ by a polyhedron of the form
%
${\mathcal{S}_y} = \cap_{i} \left\{y \in \mathbb{R}^{n_f} \mid c_i^\top y - d_i \leq 0 \right\}$, 
%
where $c_i$ are given and the goal is to find the smallest value of $d_i$, for each $i$, such that $f(\mathcal{X}) \subseteq {\mathcal{S}_y}$.

By reparameterizing $S$ in \eqref{eq: S for hyperplance} we can also compute the best ellipsoidal over-approximation of $f(\mathcal{X})$. Specifically, define
$$
S = \begin{bmatrix}
0 & 0 & 0 \\ 0 & A_y^2 & A_yb_y \\ 0 & b_y^\top A_y & b_y^\top b_y-1
\end{bmatrix}.
$$
Then the inclusion $f(\mathcal{X}) \subseteq \mathcal{S}_y=f(\mathcal{S}_x)$ implies that $f(\mathcal{X})$ is enclosed by the ellipsoid
%
$\mathcal{S}_{y}= \{y \in \mathbb{R}^{n_f} \mid \|A_y y + b_y\|_2 \leq 1\}$.  
%
Therefore, finding the minimum-volume ellipsoid enclosing $f(\mathcal{X})$ amounts to the optimization problem
\begin{alignat}{2} \label{eq: minimum evolume covering ellipsoid}
& \mathrm{minimize} && \quad \log\det(A_y^{-1}) \\
& \mathrm{subject \ to} && \quad M_{\mathrm{in}}(P) + M_{\mathrm{mid}}(Q) + M_{\mathrm{out}}(S(A_y,b_y)) \preceq 0 \notag \\
& && \quad (P,Q,A_y,b_y) \in \mathcal{P}_{\mathcal{X}}\times \mathcal{Q} \times \mathbb{S}^{n_f} \times \mathbb{R}^{n_f}. \notag
\end{alignat}
Note that this problem is not convex in $(A_y,b_y)$ due to the non-affine dependence of $S$ on these variables. However, by using Schur complements, we can formulate an equivalent convex program. We skip the details for the sake of space and refer the reader to \cite{fazlyab2019probabilistic}.
%



\subsection{Closed-Loop Reachability Analysis} \label{subsection:Closed-Loop Reachability Analysis}
By modifying the matrix $S$ in \eqref{eq: S for hyperplance}, we can use a similar approach as presented in $\S \ref{subsec: Certified Upper Bounds}$ to over approximate the reachable sets of closed-loop systems involving neural networks. Specifically, consider a discrete-time Linear Time-Invariant (LTI) system driven by a neural network controller,
\begin{align}
x^{+}  = f_{cl}(x) :=A x + Bf(x), \ x\in \mathcal{X}.
\end{align}
Given a set of current states $\mathcal{X}$, the one-step forward reachable set is
$
\mathcal{X}^{+} = f_{cl}(\mathcal{X}).
$
Suppose $\mathcal{S}_x$ in \eqref{eq: safety set spec} is defined by $\mathcal{S}_x = \{x \in \mathbb{R}^{n_x} \mid c^\top f_{cl}(x) \leq d\}$, where
$$
 S = \begin{bmatrix}
	0 & 0 & A^\top c \\ 0 & 0 & B^\top c\\ c^\top A & c^\top B & -2d
\end{bmatrix}.
$$
According to Theorem \ref{thm: main result multi layer}, the feasibility of the LMI \eqref{thm: main result multi layer 1} for some $(P,Q) \! \in \! \mathcal{P}_{\mathcal{X}} \times \mathcal{Q}_{\phi}$  would allow us to conclude $\mathcal{X} \subseteq \mathcal{S}_x$, or equivalently, $c^\top f_{cl}(x)\leq d$ for all $x \in \mathcal{X}$. By repeating this for different pairs $(c_i,d_i) \in \mathbb{R}^{n_f} \times \mathbb{R},  \ i=1,\cdots,m$, we can overapproximate the one-step reachable set $f_{cl}(\mathcal{X})$ by the polyhedron $\mathcal{P} =\left\{x^{+} \in \mathbb{R}^{n_x} \mid c_i^\top x^{+} - d_i \leq 0 \ i=1,\cdots,m \right\}$. Similarly, we can also overapproximate the closed-loop reachable sets by ellipsoids. In $\S\ref{subsection: Verification of Approximate Model Predictive Control}$ we use this approach to verify a model predictive controller approximated by a neural network.

\begin{figure*}
	\centering
	\includegraphics[width=\linewidth]{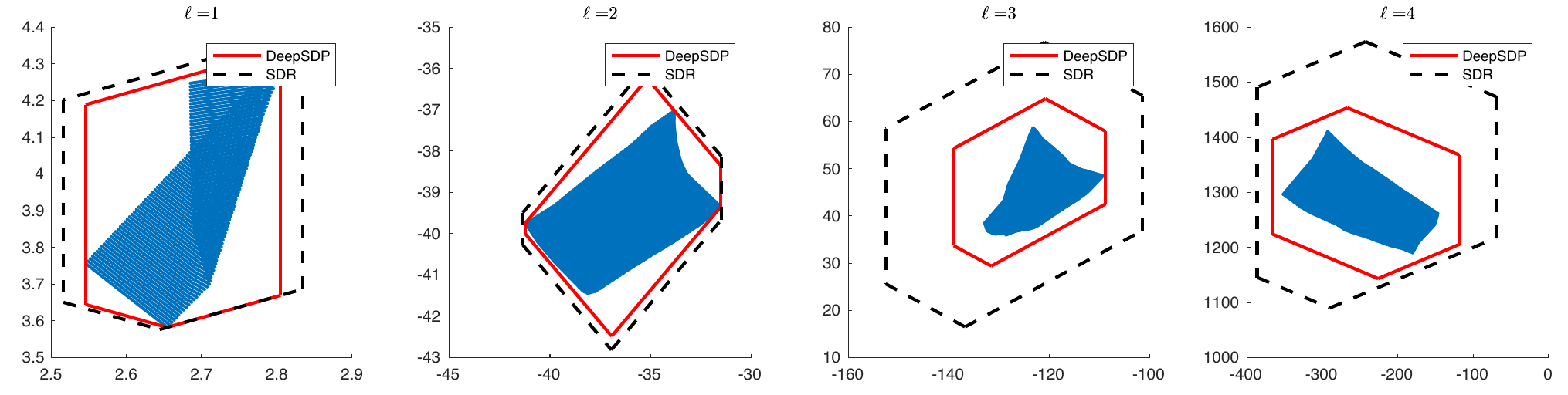}
\caption{\small Illustrations of the output set (blue), the polytope obtained from the results of this paper (red), and the polytope obtained by the semidefinite relaxation of \cite{raghunathan2018semidefinite} (dashed black). The number of neurons per layer is 100, and the input set is the $\ell_{\infty}$ ball with center $x^\star=(1,1)$ and radius $\epsilon=0.1$. The weights of the neural networks are drawn according to the Gaussian distribution $\mathcal{N}(0,1/\sqrt{n_x}).$ From the left to right, the number of hidden layers is $1,2,3,$ and $4$ (the activation function is $\relu$).}
	\label{fig: visualization repeated}
\end{figure*}

\section{Discussion and Numerical Experiments} \label{sec: Numerical Experiments}
In this section, we discuss the numerical aspects of our approach. For solving the SDP, we used MOSEK \cite{mosek} with CVX \cite{cvx} on a 5-core personal computer with 8GB of RAM. For all experiments, we used $\relu$ activation functions and did Interval Bound Propagation as a presolve step to determine the element-wise bounds on the activation functions\footnote{All code, data, and experiments for this paper are available at 
	\url{https://github.com/mahyarfazlyab/DeepSDP}. 
}. We start with the computational complexity of the proposed SDP.

\subsection{Computational Complexity}

\begin{figure}
	\centering
	\includegraphics[width=0.9\linewidth]{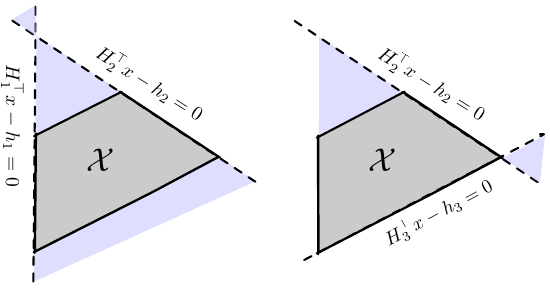}
	\caption{\small A sector bound that is not tight (Left); and a sector bound that is tight (Right).} 
	\label{fig: polytope_qc}
\end{figure}

\subsubsection{Input Set} 
The number of decision variables for the input set depends on the set representation. The quadratically constrained set that over-approximates hyperectangles is indexed by $n_x$ decision variables, where $n_x$ is the input dimension (see Proposition \ref{prop: QC for hyperrect}). Note that for hyper-rectangles, we can include additional quadratic constraints. Indeed, any $x$ satisfying $\underline{x} \leq x \leq \bar{x}$ satisfies $2n_x^2-n_x$ quadratic constraints of the form
$(x_i-\underline{x}_i)(\bar{x}_j-x_i) \geq 0$, $(x_i-\underline{x}_i)(x_j-\underline{x}_j) \geq 0 \ i \neq j$, $(x_i-\bar{x}_i)(x_j-\bar{x}_j) \geq 0 \ i \neq j$.
%
However, one can precisely characterize a hyper-rectangle with only $n_x$ of these quadratic constraints, namely, $(x_i-\underline{x}_i)(\bar{x}_i-x_i)\geq 0 $. Our numerical computations reveal that adding the remaining QCs would not tighten the relaxation.

For polytopes, the maximum number of decision variables is $\binom{m}{2}$, where $m$ is the number of half-spaces defining the polytope. However, we can use some heuristics to remove quadratic constraints that are not ``tight''. For instance, for the polytope $\mathcal{X} = \{x \mid H x \leq h\}$, we can write $\binom{m}{2}$ sector bounds of the form
$
(H_i^\top x -h_i)(H_j^\top x - h_j) \geq 0.
$
Now if the intersection of these hyperplanes belongs to $\mathcal{X}$, then the sector would be tight (see Figure \ref{fig: polytope_qc}). We can verify this by checking the feasibility of
$$
H_i^\top x -h_i = H_j^\top x - h_j = 0, \ H_k^\top x -h_k \leq 0, \ k \neq i,j.
$$
Finally, for the case of ellipsoids, we only have one decision variable, the parameter $\mu$ in \eqref{eq: Ellipsoid}.

\medskip

\subsubsection{Activation Functions} For a network with $n$ hidden neurons, if we use all possible quadratic constraints, the number of decision variables will be $\mathcal{O}(n + n^2)$. If we ignore repeated nonlinearities, we will arrive at $\mathcal{O}(n)$ decision variables. In our numerical experiments, we did not observe any additional conservatism after removing repeated nonlinearities across the neurons of the same layer. However, accounting for repeated nonlinearities was sometimes very effective for the case of multiple layers.
%


\subsubsection{Safety Specification Set} The number of decision variables for the safety specification set depends on how we would like to bound the output set. For instance, for finding a single hyperplane, we have only one decision variable. For the case of ellipsoids, there will be $\mathcal{O}(n_f^2)$ decision variables.

\subsection{Synthetic Examples}
\subsubsection{Number of Hidden Layers} As the first experiment, we consider finding over-approximations of the reachable set of a neural network with a varying number of layers, for a given input set. Specifically, we consider randomly-generated neural networks with $n_x=2$ inputs, $n_f=2$ outputs, and $\ell=\{1,2,3,4\}$ hidden layers, each having $n_k=100$ neurons per layer. For the input set, we consider $\ell_{\infty}$ balls with center $x^\star=(1,1)$ and radius $\epsilon=0.1$. We use $\deepsdp$ to find over-approximations of  $f(\mathcal{X})$ in the form of polytopes (see $\S$\ref{subsec: Certified Upper Bounds}). In Figure \ref{fig: visualization repeated}, we compare the output set $f(\mathcal{X})$ (using exhaustive search over $\mathcal{X}$) with two over-approximations: the red polytope is obtained by solving $\deepsdp$. The dashed black polytope is obtained by the semidefinite relaxation ($\sdr$) approach of \cite{raghunathan2018semidefinite}. We observe that the bounds obtained by $\deepsdp$ are relatively tighter, especially for deeper networks. In Appendix \ref{app: More Visualizations}, we provide more visualizations.
%
%

\begin{figure}
	\centering
	\includegraphics[width=\linewidth]{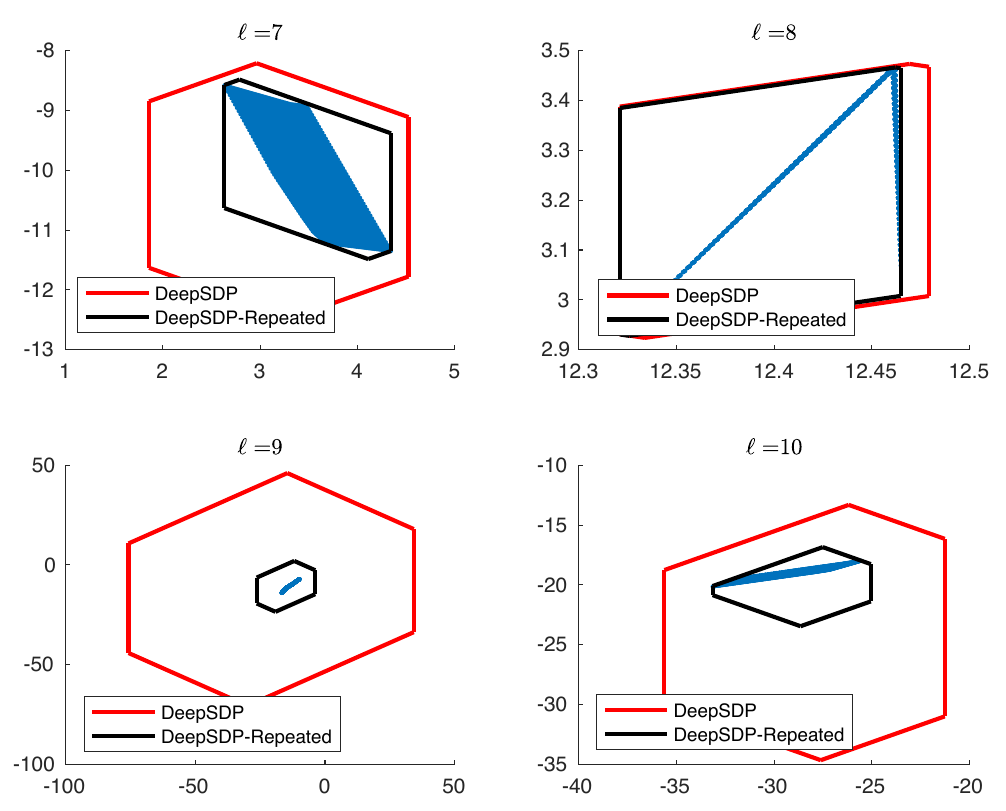}
	\caption{\small Plots of the output set (blue), the polytope obtained from $\deepsdp$ without repeated nonlinearities (red), and the polytope obtained by $\deepsdp$ after including repeated nonlinearities (black).}
	\label{fig: complexity accuracy tradeoff}
\end{figure}

\medskip

\subsubsection{Repeated Nonlinearities} As the second experiment, we study the effect of including repeated nonlinearities on the tightness of the bounds. Specifically, we bound the output of a randomly-generated neural network with $n_x=2$ inputs, $n_f=2$ output, and $n_k=10$ neurons per layer by a polytope with 6 facets. For the input set we consider $\ell_{\infty}$ ball with center $x^\star=(1,1)$ and radius $\epsilon=0.1$. In Figure \ref{fig: complexity accuracy tradeoff}, we plot the output set, and it over-approximation by $\deepsdp$ before and after including repeated nonlinearities. We observe that by including repeated nonlinearities, the bounds become tighter, especially for deep networks.

\begin{figure}
	\centering
	\includegraphics[width=0.9\linewidth]{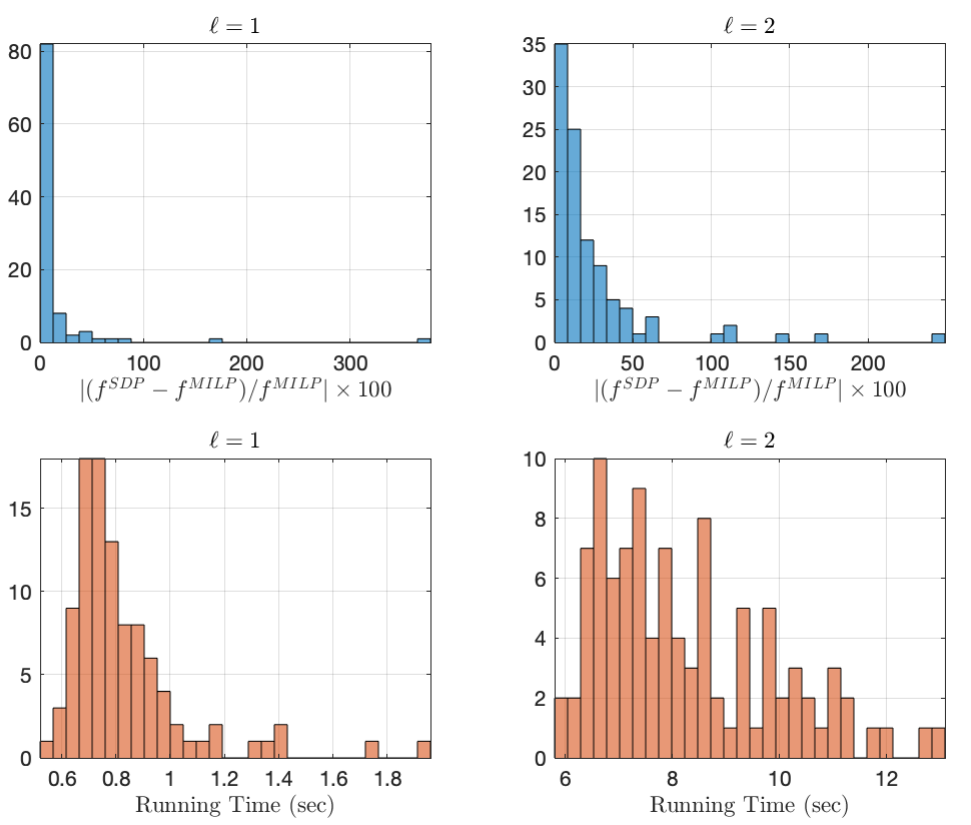}
	\caption{\small (Top) Histograms of the normalized gap between the optimal values and their corresponding bounds obtained by $\deepsdp$. (Bottom) Histograms of solve times in seconds.}
	\label{fig: histograms}
\end{figure}

\medskip

\begin{table*}[]
	\centering
	\begin{tabular}{|l|c|l|c|c|c|c|c|c|c|}
		\hline
		& \multicolumn{5}{c|}{Bounds}                                                                                                                        & \multicolumn{4}{c|}{Running Time (Sec)}                                                                                                   \\ \hline
		$\ell$ & \multicolumn{2}{l|}{$\milp$} & \multicolumn{1}{l|}{$\deepsdp$} & \multicolumn{1}{l|}{$\sdr$} & \multicolumn{1}{l|}{$\lp$} & \multicolumn{1}{l|}{$\milp$} & \multicolumn{1}{l|}{$\deepsdp$} & \multicolumn{1}{l|}{$\sdr$} & \multicolumn{1}{l|}{$\lp$} \\ \hline
		$1$    & \multicolumn{2}{c|}{$1.07$}    & $1.12$                            & $1.13$                        & $1.81$                                & $0.04$                         & $0.82$                            & $0.55$                        &                                \\ \hline
		$2$    & \multicolumn{2}{c|}{$2.04$}    & $2.52$                            & $2.74$                        & $7.62$                                & $25.96$                        & $8.26$                            & $4.71$                        &                                \\ \hline
		$3$    & \multicolumn{2}{c|}{-}           & $11.08$                           & $12.21$                       & $50.60$                               & -                                & $34.18$                           & $31.20$                       &                                \\ \hline
		$4$    & \multicolumn{2}{c|}{-}           & $47.74$                           & $54.15$                       & $368.65$                              & -                                & $78.95$                           & $94.74$                       &                                \\ \hline
		$5$    & \multicolumn{2}{c|}{-}           & $218.8$                             & $266.3$                         & $3004.9$                                & -                                & $164.63$                          & \multicolumn{1}{l|}{$207.77$} &                                \\ \hline
	\end{tabular}
	\caption{Average values (over 100 runs) of different upper bounds for the problem $\sup_{x \in \mathcal{X}} f(x)$ with $\mathcal{X} = \|x-x_{\star}\|_{\infty} \leq \epsilon$, $x_{\star}=\mathrm{1}_{n_x}$ and $\epsilon = 0.2$. The neural network $f$ has $n_x=10$ inputs, $n_f=1$ output and $\ell \in \{1,\cdots,5\}$ hidden layers.}
	\label{tab:average upper bounds}
\end{table*}

\subsubsection{Comparison with Other Methods} As the third experiment, we consider the following optimization problem, 
\begin{align} \label{eq: opt problem experiments}
f^{\star} = \sup_{ \|x-x^\star\|_\infty \leq \epsilon}  c^\top f(x).
\end{align}
To evaluate the tightness of our bounds, we compare $\deepsdp$ with the $\milp$ formulation of \cite{dutta2018output}, the semidefinite relaxation ($\sdr$) of \cite{raghunathan2018semidefinite}, and the LP relaxation of \cite{kolter2017provable}.
For the problem data, we generated random instances of neural networks with $n_x=10$ inputs, $n_f=1$ output and $\ell \in \{1,\cdots,5\}$ hidden layers; for each layer size, we generated $100$ random neural networks with their weights and biases chosen independently from the normal distribution $\mathcal{N}(0,1/\sqrt{n_x})$. For the input set, we consider $x^\star=\mathrm{1}_{n_x}$ and $\epsilon=0.2$. In Table \ref{tab:average upper bounds}, we report the comparisons of bounds and running times. The MILP formulation finds the global solution but the running time grows quickly as the number of neurons increases. Compared to $\sdr$, the bounds of $\deepsdp$ are relatively tighter, especially for deeper networks. Finally, the LP relaxation bounds are considerably looser but the running time is negligible. In Figure \ref{fig: histograms}, we plot the histograms of the normalized gap between the optimal value $f^\star$ (obtained by MILP) and the upper bound $f^{\SDP}$ for layer sizes $\ell=1,2$. 
\subsection{Verification of Approximate Model Predictive Control} \label{subsection: Verification of Approximate Model Predictive Control}
\begin{figure*}
	\centering
	\includegraphics[width=1.0\linewidth]{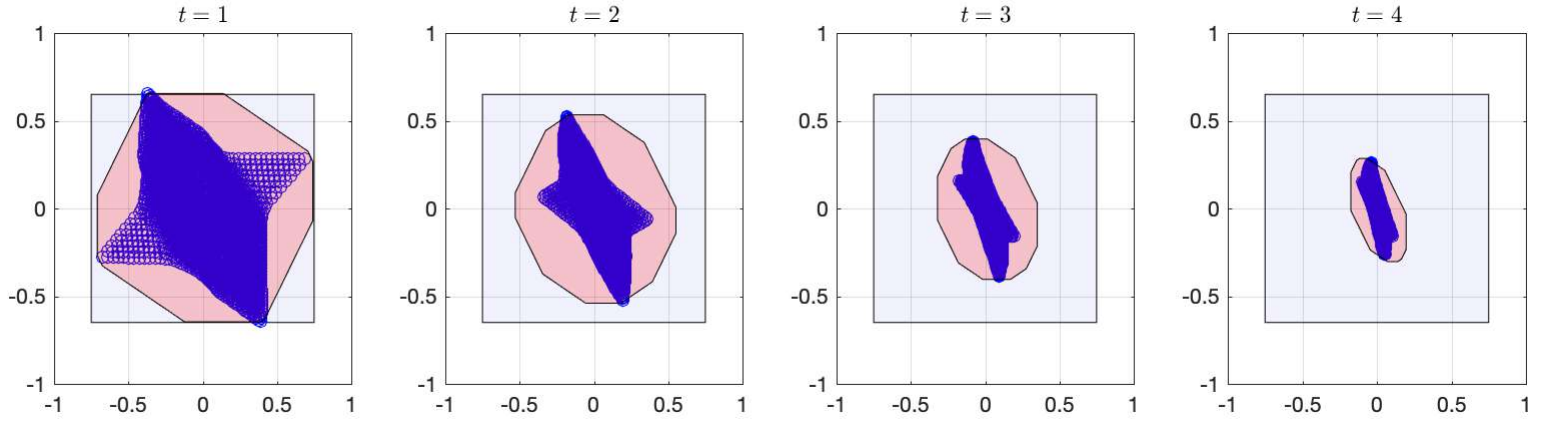}
	\caption{\small Illustration of the invariant set $\mathcal{E}$ (light blue), the output reachable sets (dark blue) and their over-approximations (light red) for the system described in $\S$ \ref{subsection: Verification of Approximate Model Predictive Control}. To over approximate the reachable set at each time step $t$, we use the over-approximation of the reachable set computed by $\deepsdp$ at $t-1$ as the initial set.}
	\label{fig: nn_mpc_1}
\end{figure*}

\begin{figure}
	\centering
	\includegraphics[width=1\linewidth]{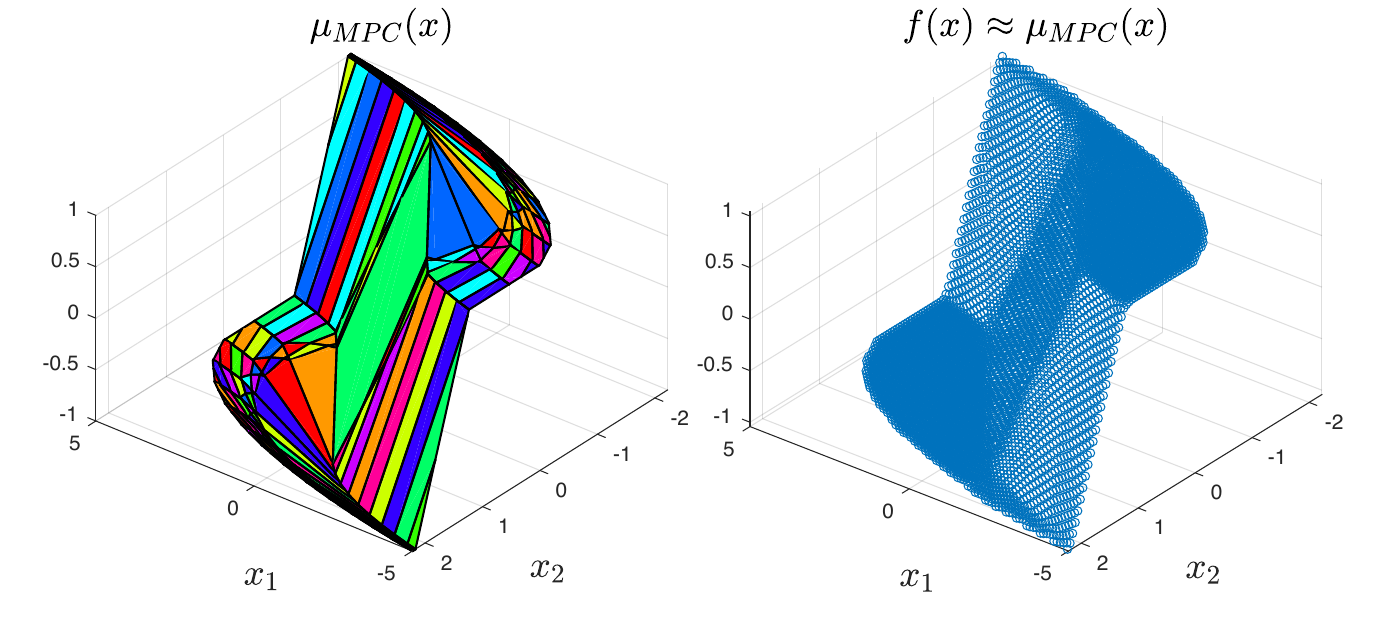}
	\caption{\small The explicit MPC control law for the system described in $\S$\ref{subsection: Verification of Approximate Model Predictive Control} (left), and its approximation by a neural network (right).}
	\label{fig: mpc and nn policy}
\end{figure}

Consider an LTI system
\begin{align} \label{eq: LTI+NN 0}
x_{k+1} &= A x_k + B u_k,  \ x_k \in \mathcal{X}, \ u_k \in \mathcal{U},
\end{align} 
where $x_k \in \mathbb{R}^{n_x}$ is the state at time $k$, $u_k \in \mathbb{R}^{n_u}$ is the control input, and $A,B$ are matrices of appropriate size. The state and control input are subject to the box constraints $\mathcal{X} = \{x \mid \underline{x} \leq x \leq \bar{x}\}$ and $\mathcal{U} = \{u \mid \underline{u} \leq u \leq \bar{u}\}$. 

Suppose the control policy is parameterized by a multi-layer fully-connected feed-forward network $f$ that is trained off-line to approximate a model predictive control (MPC) law $\mu^\star(x)$. The motivation is to reduce the computational burden of solving an optimization problem online to determine the MPC control action. The trained neural network, however, does not necessarily satisfy the specifications of the MPC control law such as state and control constraint satisfaction. To ensure input constraint satisfaction, we project the neural network output onto $\mathcal{U}$, resulting in the closed-loop system,
\begin{align} \label{eq: LTI+NN}
x_{k+1} &=f_{cl}(x_k):= A x_k + B \mathrm{Proj}_{\mathcal{U}}(f(x_k)).
\end{align} 
Note that for input box constraints, $\mathcal{U}=\{u \mid \underline{u} \leq u \leq \bar{u}\}$, we can embed the projection operator as two additional layers with a specific choice of weights and biases. Indeed, for an $\ell$-layer $f$, we can describe $f_p(x)=\mathrm{Proj}_{\mathcal{U}}(f(x_k))$ via the $(\ell+2)$-layer $\relu$ network,
\begin{align} \label{eq: DNN model with proj}
x^0 &=x \\ \nonumber 
x^{k+1} &=\max(W^k x^k + b^k,0) \quad k=0, \cdots, \ell-1 \\ \nonumber 
x^{\ell+1} &= \max(W^{\ell}x^{\ell}+b^{\ell}-\underline{u},0) \\ \nonumber
x^{\ell+2} &= \max(-x^{\ell+1}+\bar{u}-\underline{u},0)\\ \nonumber
f_p(x) &= -x^{\ell+2}+\bar{u}.
\end{align}
To validate state constraint satisfaction, we must ensure that there is a set of initial states $\mathcal{E}\subseteq \mathcal{X}$ whose trajectories would always satisfy the state constraints. One such set is a positive invariant set. By definition, a set $\mathcal{E}$ is positively invariant with respect to $f_{cl}$, if and only if $x_0 \in \mathcal{E}$ implies $x_k \in \mathcal{E}$ for all $k \geq 1$. Equivalently, $\mathcal{E}$ is positively invariant if $f_{cl}(\mathcal{E}) \subseteq \mathcal{E}$. We now show that how we can compute a positive invariant set for \eqref{eq: LTI+NN} using semidefinite programming. 

To find a positive invariant set for the closed-loop system, we consider the candidate set $\mathcal{E}=\{x \mid \|x\|_{\infty} \leq \epsilon\}$.  We first over approximate the one-step reachable set $f_{cl}(\mathcal{E})$ by the polytope $\mathcal{P} = \{x \mid H x \leq h\}$, $H \in \mathbb{R}^{m \times n_x}, h \in \mathbb{R}^{m}$ (see $\S\ref{subsection:Closed-Loop Reachability Analysis}$). To do this, we form the following $m$ SDPs
\begin{alignat}{2} \label{eq: SDP over abstracted network 2}
& \mathrm{minimize} && \quad h_i \\
& \mathrm{subject \ to} && \quad M_{\mathrm{in}}(P) + M_{\mathrm{mid}}(Q) + M_{\mathrm{out}}(S_i) \preceq 0 \notag \\
& && \quad (P,Q,h_i) \in \mathcal{P}_{\mathcal{E}} \times \mathcal{Q}_{\phi} \times \mathbb{R}, \notag
\end{alignat}
where
$$
S_i = \begin{bmatrix}
0 & 0 & A^\top H^\top e_i \\ 0 & 0 & B^\top H^\top e_i \\ e_i^\top H A & e_i^\top H B & -2e_i^\top h
\end{bmatrix} \ i=1,\cdots,m.
$$
With this choice of $S_i$, it is not difficult to show that the feasibility of the LMIs in \eqref{eq: SDP over abstracted network 2} implies $f_{cl}(\mathcal{E}) \subseteq \mathcal{P}$, and therefore, \eqref{eq: SDP over abstracted network 2} finds the smallest $\mathcal{P}$ that encloses $f_{cl}(\mathcal{E})$. Then, $\mathcal{E}$ is positively invariant if $\mathcal{P} \subseteq \mathcal{E}$.

For the numerical experiment, we first consider a 2D system
\begin{align}
x_{t+1} \!=\! 1.2 \begin{bmatrix}
1 & 1\\  0 & 1
\end{bmatrix} x_t \!+\! \begin{bmatrix}
1 \\ 0.5
\end{bmatrix} u_t,
\end{align}
subject to the state and input constraints $x_t \in \mathcal{X}=\{ x\mid \|x\|_{\infty} \leq 5\}$ and $u \in \mathcal{U}=\{u \mid \|u\|_{\infty} \leq 1\}$. We are interested in stabilizing the system by solving the finite horizon problem
\begin{alignat}{2}
&\mathrm{minimize} &&\sum_{t=0}^{T} \|x_t\|_2^2 + u_t^2 \\ \notag
&\text{s.t.} \ &&(x_t,u_t) \in \mathcal{X} \times \mathcal{U} \ \  t=0,\cdots,T, \ x_0 = x,
\end{alignat}
and choosing the control law as $\mu_{MPC}(x)=u_0^\star$. For generating the training data, we compute $\mu_{MPC}(x)$ at 6284 uniformly chosen random points from the control invariant set. We then train a neural network with two inputs, one output, and two hidden layers with 32 and 16 neurons, respectively using the mean-squared loss. In Figure \ref{fig: mpc and nn policy}, we plot the explicit MPC control law as well as its approximation by the neural network.

In Figure \ref{fig: nn_mpc_1}, we plot the largest invariant set $\mathcal{E}$ that we could find, which is $\mathcal{E}= \{x \mid \|x\|_{\infty} \leq 0.65\}$. In this figure, we also plot the output reachable sets for the first four time steps, starting from the initial set $\mathcal{E}$, as well as their over-approximations by $\deepsdp$.

\section{Conclusions} \label{sec: Conclusions}
\mahyar{We proposed a semidefinite programming framework for robustness analysis and safety verification of feed-forward fully-connected neural networks with general activation functions. Our main idea is to abstract the nonlinear activation functions by quadratic constraints that are known to be satisfied by all possible input-output instances of the activation functions. We then showed that we can analyze the abstracted network via semidefinite programming. We conclude this paper with several future directions.

First, a notable advantage of the proposed SDP compared to other convex relaxations is the relative tightness of the bounds. In particular, coupling all pairs of neurons in the network (repeated nonlinearities) can considerably reduce conservatism. However, coupling all neurons is not feasible for even medium-sized networks as the number of decision variables would scale quadratically with the number of neurons. Nevertheless, our numerical experiments show that most of these pair-wise couplings of neurons are redundant and do not tighten the bounds. It would be interesting to develop a method that can decide \emph{a priori} that coupling which pairs of neurons would tighten the relaxation. Second, one of the drawbacks of SDPs is their limited scalability in general. Exploiting the structure of the problem (e.g. sparsity patterns induced by the network strucrure) to reduce the computational complexity would be an important future direction. Third, we have only considered fully-connected networks in this paper. It would be interesting to extend the results to other architectures. Finally, incorporating the proposed framework in training neural networks with desired robustness properties would be another important future direction.}

\appendix

\section{Appendix}

\subsection{Proof of Proposition \ref{prop: QC for hyperrect}} \label{prop: QC for hyperrect proof}
The inequality $\underline{x} \leq x \leq \bar{x}$ is equivalent to $n_x$ quadratic inequalities of the form
%
$(x_i-\underline{x}_i) (\bar{x}_i-x_i) \geq 0 \quad i=1,\cdots,n_x$. 
%
Multiplying both sides of with $\Gamma_{i} \geq 0$, summing over $i=1,\cdots,n_x$, and denoting $\Gamma=\operatorname{diag}(\gamma_1,\cdots,\gamma_{n_x})$ yields the claimed inequality. $\square$
\subsection{QCs for Polytopes, Zonotopes, and Ellipsoids} \label{QCs for Polytopes, Zonotopes, and Ellipsoids}
\subsubsection{Polytopes} For every vector $x$ satisfying $Hx \leq h$, we have
%
$(H_i^\top x - h_i)(H_j^\top x - h_j) \geq 0, \ i \neq j$,
%
where $H_i^\top$ is the $i$-th row of $H$. These inequalities imply
\begin{align*}
\sum_{1 \leq i,j \leq m} \Gamma_{ij}(H_i^\top x \!-\! h_i)(H_j^\top x \!-\! h_j) \geq 0,
\end{align*}
where $\Gamma_{ij} = \Gamma_{ji} \geq 0,\ i \neq j, \ \Gamma_{ii}=0$. 
%
The preceding inequality is equivalent to \eqref{eq: QC for polytope}.
Now suppose the set $\{x \mid Hx \geq h\}$ is empty. Then
\begin{align*}
\mathcal{X} =  \{x \mid (H_i^\top x-h_i)^\top (H_j^\top x-h_j) \geq 0, \ i \neq j\}.
\end{align*}
To show this set equality define $\mathcal{X}_{Q}$ as the set on the right-hand side. We have $\mathcal{X} \subset \mathcal{X}_{Q}$. To show $\mathcal{X}_{Q} \subset \mathcal{X}$, suppose $x \in \mathcal{X}_Q$, implying that either $H_i^\top x - h_i \leq 0$ for all $i$ or $H_i^\top x - h_i \geq 0$ for all $i$. But the latter cannot happen since the set $\{x \mid Hx \geq h\}$ is empty. Therefore, we have $H_{i}^\top x - h_{i} 
\leq 0 \text{ for all } i$.


\medskip

\subsubsection{Zonotopes} By multiplying both sides of \eqref{eq: QC for zonotopes} by $[\lambda^\top  \ 1]$ and $[\lambda^\top  \ 1]^\top$, respectively, and noting that $x = x_c + A \lambda$ we obtain
\begin{gather*}
\begin{bmatrix}
	x \\ 1
	\end{bmatrix}^\top
	P 
	\begin{bmatrix}
	x \\ 1
	\end{bmatrix} \geq  \begin{bmatrix}
\lambda \\ 1
\end{bmatrix}^\top\begin{bmatrix}
-2\Gamma & \Gamma \mathrm{1}_m \\ -\mathrm{1}_m^\top \Gamma  & 0
\end{bmatrix} \begin{bmatrix}
\lambda \\ 1
\end{bmatrix} \geq 0, \notag
\end{gather*}
where the right inequality follows from the fact that $\lambda \in [0,1]^m$, hence satisfying the QC of Proposition \ref{prop: QC for hyperrect}. 

\subsubsection{Ellipsoids} Any $x \in \mathcal{X}$ satisfies $\mu(1-(Ax+b)^\top (Ax+b)) \geq 0$ for $\mu \geq 0$. The latter inequality is equivalent to \eqref{eq: Ellipsoid}.
$\square$

\subsection{Proof of Lemma \ref{lemma: repeated nonlinearities}} \label{lemma: repeated nonlinearities proof}
For any distinct pairs $(x_i,\varphi(x_i))$ and $(x_j,\varphi(x_j))$, $1\leq i<j \leq n$, we can write the slope restriction inequality in \eqref{eq: quadratic constraint 0} as
\begin{align*} 
\begin{bmatrix}
x_i-x_j \\ \varphi(x_i)-\varphi(x_j)
\end{bmatrix}^\top
\begin{bmatrix}
-2\alpha \beta & \alpha+\beta \\ \alpha+\beta & -2
\end{bmatrix}
\begin{bmatrix}
x_i-x_j \\ \varphi(x_i)-\varphi(x_j)
\end{bmatrix} \geq 0.
\end{align*}
By multiplying both sides by $\lambda_{ij} \geq 0$, we obtain
\begin{align*}  
\begin{bmatrix}
x \\ \phi(x) \\ 1
\end{bmatrix}^\top
\begin{bmatrix}
-2\alpha \beta E_{ij} \lambda_{ij} & (\alpha+\beta) E_{ij}  \lambda_{ij} & 0 \\ (\alpha+\beta) E_{ij}  \lambda_{ij} & -2E_{ij} \lambda_{ij} & 0 \\ 0 & 0 & 0
\end{bmatrix}
\begin{bmatrix}
x \\ \phi(x) \\ 1
\end{bmatrix} \geq 0,
\end{align*}
where $E_{ij}=(e_i-e_j)(e_i-e_j)^\top$ and $e_i \in \mathbb{R}^n$ is the $i$-th unit vector in $\mathbb{R}^n$. Summing over all $1\leq i<j \leq n$ will yield the desired result. 

\subsection{Proof of Lemma \ref{lem: QC for relu}} \label{lem: QC for relu proof}
Consider the equivalence in \eqref{eq: relu qcs} for the $i$-th coordinate of $y = \max(\alpha x,\beta x), \ x\in \mathbb{R}^n$:
\begin{align*}  
(y_i-\alpha x_i)(y_i-\beta x_i)=0, \ y_i \geq \beta x_i, \quad y_i \geq \alpha x_i.
\end{align*}
Multiplying these constraints by $\lambda_i \in \mathbb{R}$, $\nu_i \in \mathbb{R}_{+}$, and $\eta_i \in \mathbb{R}_{+}$, respectively, and adding them together, we obtain
\begin{align*}  
\begin{bmatrix}
x_i \\ y_i \\ 1
\end{bmatrix}^\top  \begin{bmatrix}
-2\alpha \beta \lambda_i & (\alpha+\beta)\lambda_i & -\beta \nu_i -\alpha \eta_i \\  (\alpha+\beta)\lambda_i & -2\lambda_i & \nu_i+\eta_i\\  -\beta \nu_i -\alpha \eta_i & \nu_i+\eta_i & 0
\end{bmatrix}\begin{bmatrix}
x_i \\ y_i \\ 1
\end{bmatrix} \geq 0.
\end{align*}
Substituting $x_i = e_i^\top x$ and $y_i = e_i^\top y$, where $e_i$ is the $i$-th unit vector in $\mathbb{R}^n$, and rearranging terms, we get
\begin{align}  \label{lem: QC for relu 4}
\begin{bmatrix}
x \\ y \\ 1
\end{bmatrix}^\top  Q_i \begin{bmatrix}
x \\ y \\ 1
\end{bmatrix} \geq 0, \ i=1,\cdots,n,
\end{align}
where
\begin{align*}
Q_i = \begin{bmatrix}
-2\alpha \beta \lambda_i & (\alpha+\beta)\lambda_i e_i e_i^\top & (-\beta \nu_i -\alpha \eta_i)e_i \\  (\alpha+\beta)\lambda_i e_i & -2\lambda_i e_i & (\nu_i+\eta_i)e_i\\  (-\beta \nu_i -\alpha \eta_i)e_i & (\nu_i+\eta_i)e_i & 0
\end{bmatrix}.
\end{align*}
Furthermore, since $y_i=\max(\alpha x_i,\beta x_i)$ is slope-restricted in $[\alpha,\beta]$, by Lemma \ref{lemma: repeated nonlinearities} we can write
\begin{align}  \label{lem: QC for relu 5}
\begin{bmatrix}
x \\ y \\ 1
\end{bmatrix}^\top  \begin{bmatrix}
- 2 \alpha \beta T & (\alpha + \beta)T & 0 \\ (\alpha + \beta)T & -2T & 0 \\ 0 & 0 & 0
\end{bmatrix}  \begin{bmatrix}
x \\ y \\ 1
\end{bmatrix} \geq 0.
\end{align}
Summing \eqref{lem: QC for relu 4} over all $i=1,\cdots,n$ and adding the result to \eqref{lem: QC for relu 5} would yield \eqref{lem: QC for relu 1}. $\square$

\subsection{Proof of Lemma \ref{lem: local QC for relu}} \label{lem: local QC for relu proof}
Consider the relation $y=\max(\alpha x,\beta x)$. For active neurons, $i \in \mathcal{I}^{+}$, we can write
\begin{align}  
(y_i-\beta x_i)(y_i-\beta x_i)=0, \ y_i = \beta x_i, \quad y_i \geq \alpha x_i. \notag
\end{align}
Similarly, for inactive neurons, $i \in \mathcal{I}^{-}$,we can write
\begin{align} 
(y_i-\alpha x_i)(y_i-\alpha x_i)=0, \ y_i \geq  \beta x_i, \quad y_i = \alpha x_i. \notag
\end{align}
Finally, for unknown neurons, $i \in \mathcal{I}^{\pm}$, we can write
\begin{align}  
(y_i-\alpha x_i)(y_i-\beta x_i)=0, \ y_i \geq \beta x_i, \quad y_i \geq \alpha x_i. \notag
\end{align}
A weighted combination of the above constraints yields
\begin{align} \label{lem: local QC for relu proof 4}
\sum_{i=1}^{n}\lambda_i (y_i\!-\!\alpha_i x_i)(y_i\!-\!\beta_i x_i) \!+\! \nu_i (y_i\!-\!\beta_i x_i) \!+\! \eta_i (y_i\!-\!\alpha_i x_i) \geq 0
\end{align}
where $\alpha_i = \alpha + (\beta-\alpha)\mathbf{1}_{\mathcal{I}^{+}}(i)$, $\beta_i =\beta -(\beta-\alpha) \mathbf{1}_{\mathcal{I}^{-}}(i)$, $\nu_i \in \mathbb{R}_{+} \text{ for } i \notin \mathcal{I}^{+}$ and $\eta_i \in \mathbb{R}_{+} \text{ for } i  \notin \mathcal{I}^{-}$. Furthermore, since $y_i = \max(\alpha x_i, \beta x_i)$ is slope-restricted on $[\alpha,\beta]$, we can write
\begin{align} \label{lem: local QC for relu proof 5}
-\sum_{i \neq j} \lambda_{ij} (y_j\!-\!y_i\!-\!\alpha (x_j\!-\!x_i))(y_j\!-\!y_i \!-\! \beta(x_j\!-\!x_i))\geq 0.
\end{align}
Adding \eqref{lem: local QC for relu proof 4} and \eqref{lem: local QC for relu proof 5} and rearranging terms would yield the desired inequality. $\square$

\subsection{Proof of Theorem \ref{thm: main result one layer}} \label{thm: main result one layer proof}
Consider the identity $x^1 = \phi(W^0 x^0 + b^0)$. Using the assumption that  $\phi$ satisfies the quadaratic constraint defined by $\mathcal{Q}_{\phi}$ on $\mathcal{Z}$, $x^0,x^1$ satisfy the QC 
%
%
\begin{align} \label{thm: hyperplance one layer 8.5}
\begin{bmatrix}
x^0 \\ x^1 \\ 1
\end{bmatrix}^\top 
\underbrace{
	\begin{bmatrix}
	W^0 & 0 & b^0 \\ 0 & I_{n_1} & 0 \\ 0 & 0 & 1
	\end{bmatrix}^\top Q \begin{bmatrix}
	W^0 & 0 & b^0 \\ 0 & I_{n_1} & 0 \\ 0 & 0 & 1
	\end{bmatrix}}_{M_{\mathrm{mid}}(Q)}\begin{bmatrix}
x^0 \\ x^1 \\ 1
\end{bmatrix} \geq 0,
%
\end{align}
for any $Q \in \mathcal{Q}_{\phi}$ and all $x^0 \in \mathcal{X}$. By assumption $\mathcal{X}$ satisfies the QC defined by $\mathcal{P}_{\mathcal{X}}$, implying that for any $P  \in \mathcal{P}_{\mathcal{X}}$,
%
%
\begin{align} \label{thm: hyperplance one layer 10}
\begin{bmatrix}
x^0 \\ x^1 \\ 1
\end{bmatrix}^\top 
\underbrace{
	\begin{bmatrix}
	I_{n_0} & 0 \\ 0 & 0 \\ 0 & 1
	\end{bmatrix}^\top P \begin{bmatrix}
	I_{n_0} & 0 & 0 \\ 0 & 0 & 1
	\end{bmatrix}}_{M_{\mathrm{in}}(P)}\begin{bmatrix}
x^0 \\ x^1 \\ 1
\end{bmatrix}  \geq 0,
\end{align}
for all $x^0 \in \mathcal{X}$. Suppose \eqref{thm: main result one layer 2} holds for some $(P,Q) \in \mathcal{P}_{\mathcal{X}} \times \mathcal{Q}_{\phi}$. By left- and right- multiplying both sides of \eqref{thm: main result one layer 2} by $[{x^0}^\top \ {x^1}^\top \ 1]$ and $[{x^0}^\top \ {x^1}^\top \ 1]^\top$, respectively, we obtain
\begin{gather*}
\underbrace{    \begin{bmatrix}
	x^0 \\ x^1 \\ 1
	\end{bmatrix}^\top M_{\mathrm{in}}(P)     \begin{bmatrix}
	x^0 \\ x^1 \\ 1
	\end{bmatrix}}_{\geq 0 \text{ for all $x^0 \in \mathcal{X}$ by } \eqref{thm: hyperplance one layer 10}}  + \underbrace{\begin{bmatrix}
	x^0 \\ x^1 \\ 1 \end{bmatrix}^\top M_{\mathrm{mid}}(Q) \begin{bmatrix}
	x^0 \\ x^1 \\ 1 \end{bmatrix}}_{\geq 0 \text{ for all $x^0 \in  \mathcal{X}$ by } \eqref{thm: hyperplance one layer 8.5}}  \\ + \begin{bmatrix}
x^0 \\ x^1 \\ 1 \end{bmatrix}^\top  M_{\mathrm{out}}(S) \begin{bmatrix}
x^0 \\ x^1 \\ 1 \end{bmatrix} \leq 0. \notag
\end{gather*}
Therefore, the last term on the left-hand side must be nonpositive for all $x^0 \in \mathcal{X}, \ x^1=\phi(W^0 x^0 + b^0)$, or, equivalently,
\begin{align*}
\begin{bmatrix}
x^0 \\ x^1 \\ 1
\end{bmatrix}^\top \begin{bmatrix}
I_{n_0} & 0 & 0 \\ 0 & {W^1}^\top & 0 \\ 0 & {b^1}^\top & 1
\end{bmatrix} S \begin{bmatrix}
I_{n_0} & 0 & 0 \\ 0 & W^1 & b^1 \\ 0 & 0 & 1
\end{bmatrix} \begin{bmatrix}
x^0 \\ x^1 \\ 1
\end{bmatrix} \leq 0.
\end{align*}
Using the relations $x^0=x$ and $f(x) = W^1 x^1 + b^1$, the above inequality is the desired inequality in \eqref{thm: main result multi layer 2}.
$\square$

\subsection{Proof of Theorem \ref{thm: main result multi layer}} \label{thm: main result multi layer proof}
Recall the definition $\mathcal{Z}= \{\bA\bx +\bb \mid x \in \mathcal{X}\}$. Since $\phi$ satisfies the QC defined by $\mathcal{Q}_{\phi}$ on $\mathcal{Z}$, for any $Q \in \mathcal{Q}_{\phi}$,  we have
%
%
\begin{align} \label{thm: hyperplance multi layer 8}
\begin{bmatrix}
\bbx \\ 1
\end{bmatrix}^\top 
\underbrace{
	\begin{bmatrix}
	\bA & \bb \\ \bB & 0 \\ 0 & 1
	\end{bmatrix}^\top Q \begin{bmatrix}
	\bA & \bb \\ \bB & 0 \\ 0 & 1
	\end{bmatrix}}_{M_{\mathrm{mid}}(Q)} \begin{bmatrix}
\bbx \\ 1
\end{bmatrix} \geq 0 \text{ for all } x^0 \in \mathcal{X},
\end{align}
By assumption $\mathcal{X}$ satisfies the QC defined by $\mathcal{P}_{\mathcal{X}}$.
%
Using the relation $x^0 = \bE^0 \bbx$, for any $P \in \mathcal{P}_{\mathcal{X}}$ it holds that
\begin{align} \label{thm: hyperplance multi layer 10}
\begin{bmatrix}
\bbx \\ 1
\end{bmatrix}^\top 
\underbrace{
	\begin{bmatrix}
	\bE^0 & 0 \\  0 & 1
	\end{bmatrix}^\top P \begin{bmatrix}
	\bE^0 & 0 \\  0 & 1
	\end{bmatrix}}_{M_{\mathrm{in}}(P)}\begin{bmatrix}
\bbx \\ 1
\end{bmatrix}  \geq 0  \text{ for all } x^0 \in \mathcal{X}.
\end{align}
Suppose the LMI in \eqref{thm: main result multi layer 1} holds for some $(P,Q) \in \mathcal{P}_{\mathcal{X}} \times \mathcal{Q}_{\phi}$. By left- and right- multiplying both sides of \eqref{thm: main result one layer 1} by $[\bbx^\top \ 1]$ and $[\bbx^\top \ 1]^\top$, respectively, we obtain
\begin{gather*}
\underbrace{\begin{bmatrix}
	\bbx \\ 1
	\end{bmatrix}^\top \!M_{\mathrm{in}}(P)\! \begin{bmatrix}
	\bbx \\ 1
	\end{bmatrix}}_{\geq 0 \text{ by } \eqref{thm: hyperplance multi layer 10}} +  \underbrace{\begin{bmatrix}
	\bbx \\ 1
	\end{bmatrix}^\top \!M_{\mathrm{mid}}(Q)\! \begin{bmatrix}
	\bbx \\ 1
	\end{bmatrix}}_{\geq 0 \text{ by } \eqref{thm: hyperplance multi layer 8}} \\ +     \begin{bmatrix}
\bbx \\ 1
\end{bmatrix}^\top \!M_{\mathrm{out}}(S)\! \begin{bmatrix}
\bbx \\ 1
\end{bmatrix} \leq 0. \notag
\end{gather*}
Therefore, the last quadratic term must be nonpositive for all $x^0 \in \mathcal{X}$, from where we can write
\begin{align*}
\begin{bmatrix}
\bbx \\ 1
\end{bmatrix}^\top \begin{bmatrix}
\bE^0 & 0 \\ W^{\ell}\bE^{\ell} & b^{\ell} \\ 0 & 1
\end{bmatrix}^\top S \begin{bmatrix}
\bE^0 & 0 \\ W^{\ell}\bE^{\ell} & b^{\ell} \\ 0 & 1
\end{bmatrix} \begin{bmatrix}
\bbx \\ 1
\end{bmatrix} \leq 0  \text{ for all } x^0 \in \mathcal{X}.
\end{align*}
Using the relations $x^0=\bE^0 \bbx $ and $f(x) = W^{\ell} \bE^{\ell} \bbx + b^{\ell}$ from \eqref{eq: multi layer neural net}, the above inequality can be written as
$$
\begin{bmatrix}
x^0 \\ f(x^0) \\1
\end{bmatrix}^\top S \begin{bmatrix}
x^0 \\ f(x^0) \\1
\end{bmatrix} \leq 0, \text{ for all } x^0 \in \mathcal{X}.
$$
$\square$

\subsection{More Visualizations} \label{app: More Visualizations}
In Figure \ref{fig:Experiment_2}, we show the effect of the number of hidden neurons on the quality of approximation for a single-layer network, and in Figure \ref{fig:Experiment_3}, we change the perturbation size.


\begin{figure}
	\centering
	\includegraphics[width=1\linewidth]{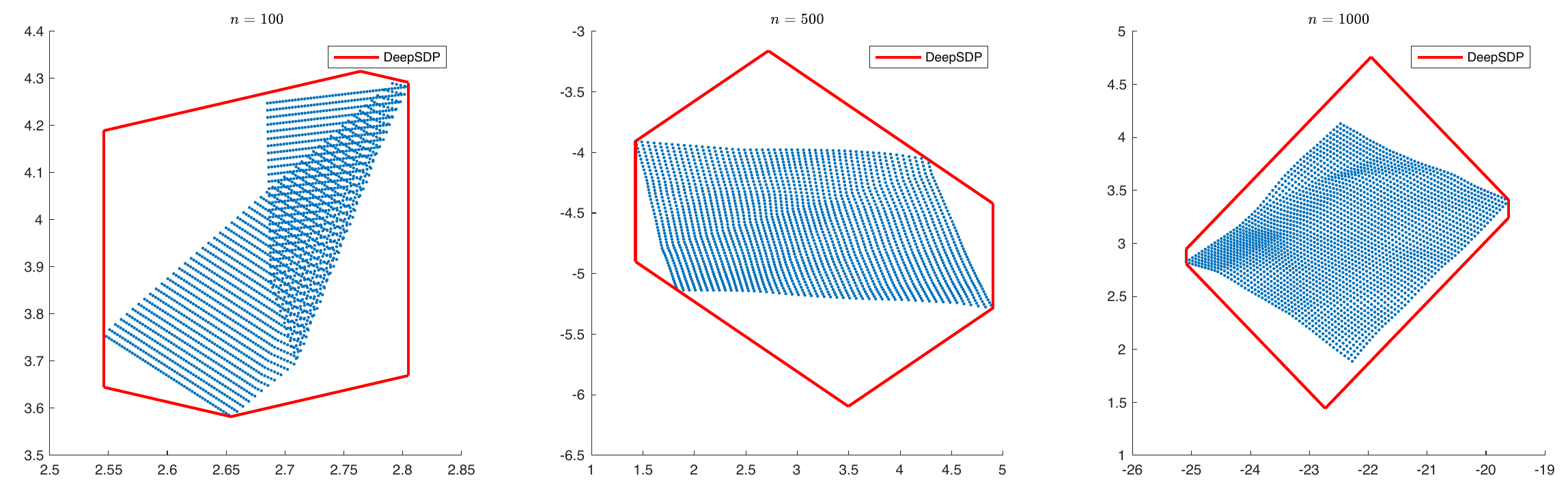}
	\caption{\small The effect of the number of hidden neurons on the over-approximation quality of the SDP for a one-layer neural network with $100$ (left), $500$ (middle), and $1000$ hidden nuerons (right). The activation function is $\relu$. Quadratic constraints for repeated nonlinearity are not included.}
	\label{fig:Experiment_2}
\end{figure}
\begin{figure}
	\centering
	\includegraphics[width=1\linewidth]{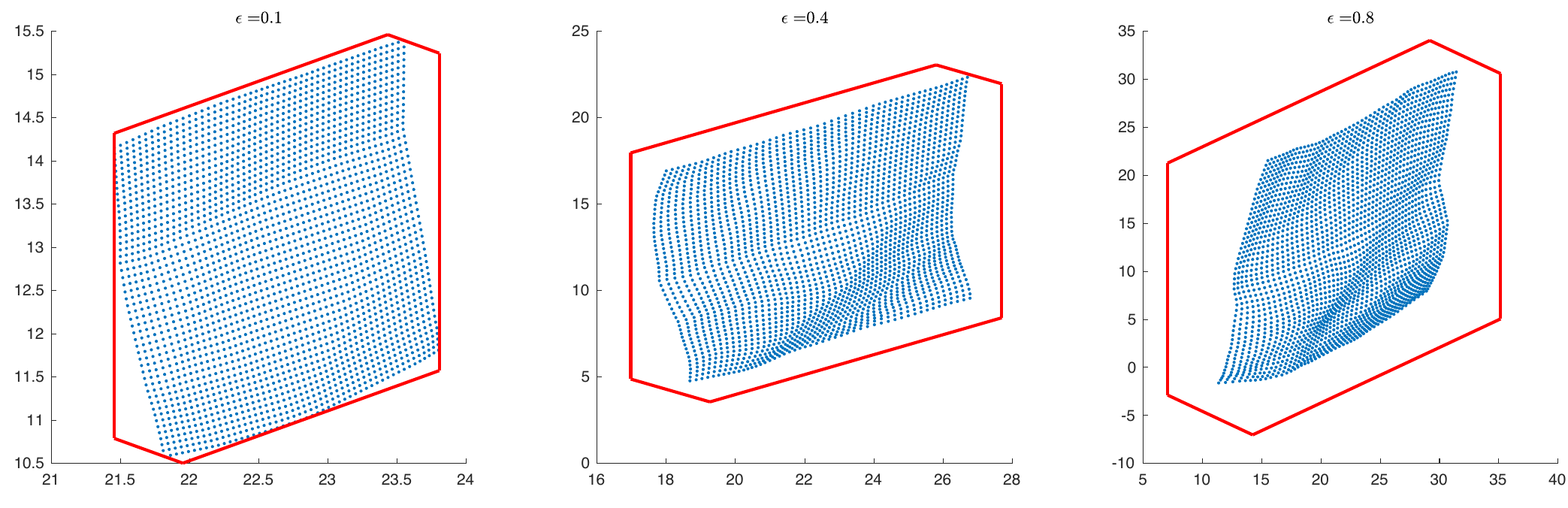}
	\caption{\small The effect of $\epsilon$ (the $\ell_{\infty}$ norm of the input set) on the over-approximation quality of the SDP for $\epsilon=0.1$ (left), $\epsilon=0.4$ (middle), and $\epsilon=0.8$ (right). The network architecture is 2-500-2 with $\relu$ activation functions. QCs for repeated nonlinearity are not included.}
	\label{fig:Experiment_3}
\end{figure}

\bibliographystyle{ieeetr}
\bibliography{Refs}

\vspace{-10mm}

\begin{IEEEbiography}[{\includegraphics[width=1in,height=1.25in,clip,keepaspectratio]{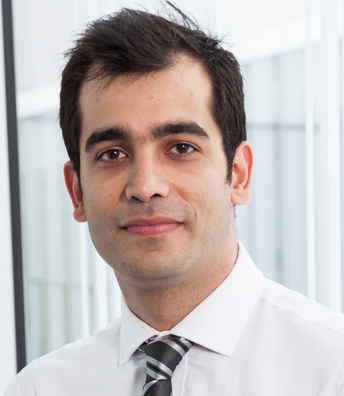}}]{Mahyar Fazlyab}
	(S'13) received his Ph.D. in Electrical and Systems Engineering from the University of Pennsylvania, Philadelphia, PA, USA, in 2018. He was also a postdoctoral fellow in the ESE Department at UPenn from 2018 to 2020. He will join the Department of Electrical and Computer Engineering and Mathematical Institute for Data Science (MINDS) at Johns Hopkins University as an Assistant Professor in 2021. His research interests are at the intersection of optimization, control, and machine learning.
	Dr. Fazlyab won the Joseph and Rosaline Wolf Best Doctoral Dissertation Award in 2019, awarded by the Department of Electrical and Systems Engineering at the University of Pennsylvania.
\end{IEEEbiography}

\begin{IEEEbiography}[{\includegraphics[width=1in,height=1.25in,clip,keepaspectratio]{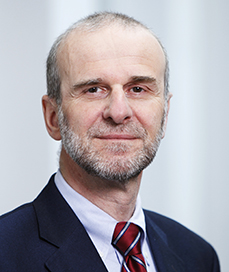}}]{Manfred Morari}
	(F'05) received the Diploma degree in chemical engineering from ETH Z\"{u}rich, Z\"{u}rich, Switzerland, and the Ph.D. degree in chemical engineering from the University of Minnesota, Minneapolis, MN, USA.
	He was a Professor and the Head of the Department of Information Technology and Electrical
	Engineering, ETH Z\"{u}rich. He was the McCollumCorcoran Professor of chemical engineering and the
	Executive Officer of control and dynamical systems with the California Institute of Technology
	(Caltech), Pasadena, CA, USA. He was a Professor at the University
	of Wisconsin, Madison, WI, USA. He is currently with the University of
	Pennsylvania, Philadelphia, PA, USA. He supervised more than 80 Ph.D.
	students.
	
	Dr. Morari is a fellow of AIChE, IFAC, and the U.K. Royal Academy of
	Engineering. He is a member of the U.S. National Academy of Engineering.
	He was a recipient of numerous awards, including Eckman, Ragazzini, and
	Bellman Awards from the American Automatic Control Council (AACC);
	Colburn, Professional Progress, and CAST Division Awards from the American Institute of Chemical Engineers (AIChE); Control Systems Award and
	Bode Lecture Prize from IEEE; Nyquist Lectureship and Oldenburger Medal
	from the American Society of Mechanical Engineers (ASME); and the IFAC
	High Impact Paper Award. He was the President of the European Control
	Association. He served on the technical advisory boards of several major
	corporations.

\end{IEEEbiography}

\begin{IEEEbiography}[{\includegraphics[width=1in,height=1.25in,clip,keepaspectratio]{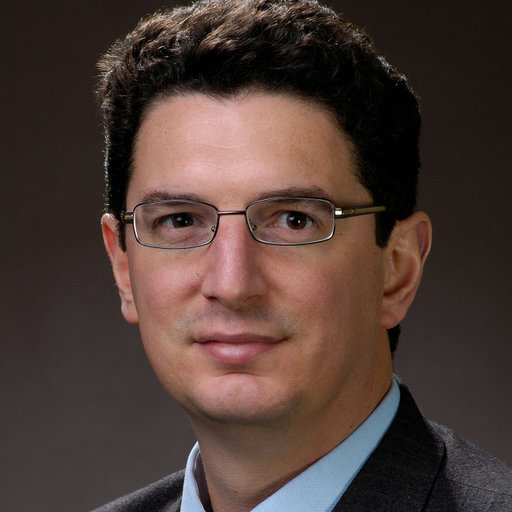}}]{George J. Pappas}
	(S'90--M'91--SM'04--F'09) received the Ph.D. degree in electrical engineering and computer sciences from the University of California, Berkeley, CA, USA, in 1998. He is currently the Joseph Moore Professor and Chair of the Department of Electrical and Systems Engineering, University of Pennsylvania, Philadelphia, PA, USA. He also holds a secondary appointment with the
	Department of Computer and Information Sciences and the Department of Mechanical Engineering and Applied Mechanics. He is a member of the GRASP Lab and the PRECISE Center. He had previously served as the Deputy
	Dean for Research with the School of Engineering and Applied Science. His
	research interests include control theory and, in particular, hybrid systems,
	embedded systems, cyberphysical systems, and hierarchical and distributed
	control systems, with applications to unmanned aerial vehicles, distributed
	robotics, green buildings, and biomolecular networks. Dr. Pappas has received
	various awards, such as the Antonio Ruberti Young Researcher Prize, the
	George S. Axelby Award, the Hugo Schuck Best Paper Award, the George
	H. Heilmeier Award, the National Science Foundation PECASE award, and
	numerous best student papers awards at ACC, CDC, and ICCPS.
\end{IEEEbiography}

\end{document}